\documentclass[sn-mathphys]{sn-jnl}


\usepackage{graphicx}%
\usepackage{multirow}%
\usepackage{amsmath,amssymb,amsfonts}%
\usepackage{amsthm}%
\usepackage{mathrsfs}%
\usepackage[title]{appendix}%
\usepackage{xcolor}%
\usepackage{textcomp}%
\usepackage{manyfoot}%
\usepackage{booktabs}%
\usepackage{algorithm}%
\usepackage{algorithmicx}%
\usepackage{algpseudocode}%
\usepackage{listings}%


\usepackage[framemethod=tikz]{mdframed} 
\usepackage{circuitikz}
\usetikzlibrary{decorations.pathmorphing, patterns,shapes}
\usetikzlibrary{arrows}
\usetikzlibrary{patterns}
\usepackage{xcolor}
\usepackage{caption}
\usepackage{subcaption}
\usepackage{orcidlink}



\theoremstyle{thmstyleone}%
\newtheorem{theorem}{Theorem}
\newtheorem{proposition}[theorem]{Proposition}%

\theoremstyle{thmstyletwo}%
\newtheorem{example}{Example}%
\newtheorem{remark}{Remark}%

\theoremstyle{thmstylethree}%
\newtheorem{definition}{Definition}%
\newtheorem{con}{Conjecture}%
\newtheorem{corollary}{Corollary}%
\raggedbottom

\begin{document}

\title[Limiting Resistances]{Limiting Behavior of Resistances in Triangular Graphs}


\author*{\fnm{Russell} \sur{Hendel}}
\email{RHendel@Towson.Edu}



\affil*{\orgdiv{Mathematics}, \orgname{Towson University}, \orgaddress{
\city{Towson}, \postcode{21252}, \state{MD.}, \country{U.S.A.}}}




\abstract{\cite{Barrett9} studied  resistance labels of electrical circuits whose underlying graphs when embedded in the Cartesian plane has the form of an $n$-grid, $n$ rows of upright triangles. Proofs in \cite{Barrett9} introduced a row-reduction algorithm  which uses series, $\Delta$--Y, and Y--$\Delta$ electric transformations to transform an $n$-grid into an $n-1$ grid with equivalent resistances between specified nodes. This paper explores this row-reduction algorithm computationally.  The introductory part of the paper presents several conjectures  supported by  numerical evidence, showing that repeated application of the row-reduction algorithm to an initial $n$-grid uniformly labeled 1 asymptotically produces  triangular grids whose sides are labeled with rational multiples of $\frac{1}{e};$ moreover, the ratio of specified consecutive edges in the row-reduced grids  are asymptotically described  by four rational functions. The  main part of this paper studies a family of graphs whose edge labels are determined  using these limiting edge-ratios functions arising in the conjectures.     The main result proven is that these  $n$-grids and their repeated reductions under the row-reduction algorithm   possess vertical and rotational symmetries and satisfy the relationships captured by the four edge-ratio functions. Thus, the limiting edge-ratio relationships  are local algebraic relationships mirroring the global vertical and rotational symmetries possessed by the underlying graph. Additionally, because row-reduction is local (in contrast to the combinatoric Laplacian which is global) the paper is able to  introduce a mechanical verification method of proof for assertions about effective resistance identities. }

\keywords{electric resistance, effective resistance, circuit transformations, triangular $n$-grid, automated proof, symmetry, local-global}


\pacs[MSC Classification]{05C85, 68W05, 94C15, 05E99 }

\vspace{2mm}

\maketitle


\section{Background and Goals}\label{sec:intro}

Several papers study resistance properties
of graphical configurations where edges are labeled with one-ohm resistances 
\cite{Barrett9,Barrett0,Barrett0b}.
These configurations are of interest in 
circuit theory and chemistry \cite{Bapat9, Chemistry1, Bollobas,Chemistry2, Klein,Chemistry3, Chemistry4, Stevenson},
combinatorial matrix theory \cite{Bapat0,Yang}, and
spectral graph theory \cite{Bapat9,Bollobas,Chenzhang,Spielman}.
 
A recent paper 
 \cite{Barrett9}
 devoted to 2-linear trees,   mentions, in passing, 
the triangular grid and offers a conjecture on  the total resistance between any two corner vertices (vertices of degree 2). The proofs in \cite{Barrett9}  utilize a \textit{row-reduction} algorithm   which uses  transformations that preserve electric resistance to reduce an $n$-row triangular grid to an $n-1$ triangular grid. 

The current paper explores computational uses of this row reduction algorithm. In the introductory part of the paper several conjectures are presented, supported by  numerical evidence, showing that repeated application of the row-reduction algorithm to an initial $n$-grid uniformly labeled 1 asymptotically produces  triangular grids whose sides are labeled with rational multiples of $\frac{1}{e};$ moreover, the ratio of specified consecutive edges in these grids are asymptotically equal to the values of four rational functions.

Motivated by this conjecture, the  main part of this paper studies a family of graphs whose edge labels are determined  using the limiting edge-ratio rational functions arising in the conjectures.   These graphs have interest independent of the conjectures which motivated them since the graphs of circuits in the literature typically are uniformly labeled one.  

The main result of this   paper is that the resulting $n$-grids including their repeated reductions under the row-reduction algorithm   possess vertical and rotational symmetries and satisfy the relationships implied by the four limiting edge-ratio rational functions; in other words,   these four limiting edge-ratio rational functions  are local algebraic relationships   that mirror the global vertical and rotational symmetries possessed by the underlying graph

In the next section (Section \ref{sec:algorithm}) we introduce notations and conventions used throughout the paper as well as present the row-reduction algorithm. This is followed by the precise formulation of, and the numerical evidence for,  the seven conjectures (Section \ref{sec:conjectures}). The  rest of the paper motivates (Section \ref{sec:motivatingexample}) and states the three main theorems of the paper (Section \ref{sec:maintheorems}) which are then proven in the concluding sections    (Sections \ref{sec:isotropy}-\ref{sec:completion}).

\section{Notations, Conventions, and Algorithms}\label{sec:algorithm} This section presents definitions, notations, and conventions for five items: the $n$-grid, a review of resistance-preserving electric-circuit transformations, the row-reduction algorithm, the four local edge functions, and proof methods. The definitions and notations are found in \cite{EvansHendel} and therefore attribution will not be repeated on each item.

  Definition \ref{def:ngrid} and Figure \ref{fig:3grid} present the definition of the $n$-grid and illustrate conventions and notations associated with it.

\begin{definition}\label{def:ngrid}
A $n$-triangular-grid (usually abbreviated as an $n$-grid)
is any graph that is (graph-) isomorphic to the graph,
whose vertices are all integer pairs $(x,y)=(2r+s,s)$ 
in the Cartesian plane, with $r$ and $s$ integer
parameters satisfying
$0 \le r \le n, 0 \le s \le n-r;$
  and whose edges
consist of any two vertices $(x,y)$ and $(x',y')$ with
(i) $x'-x=1, y'-y=1,$ 
 (ii) $x'-x=2, y'-y=0,$ or
(iii) $x'-x=1, y'-y=-1.$   
\end{definition}

\begin{figure}[ht!]
\begin{center}

\caption{A 3-grid embedded in the Cartesian Plane constructed using Definition~\ref{def:ngrid} (Left Panel) with the paper's conventions on row and diagonal coordinates (Right Panel).}\label{fig:3grid}
\begin{tabular}{|c|}
\hline
\begin{tikzpicture} [xscale=1,yscale =1]

\node [below] at (3,3.2) {    }; 
 
\draw (0,0)--(1,1)--(2,2)--(3,3);
\draw (2,0)--(3,1)--(4,2);
\draw (4,0)--(5,1);

\draw (3,3)--(4,2)--(5,1)--(6,0);
\draw (2,2)--(3,1)--(4,0);
\draw (1,1)--(2,0);

\draw (0,0)--(2,0)--(4,0)--(6,0);
\draw (1,1)--(3,1)--(5,1);

\draw (2,2) -- (4,2);

\begin{scriptsize}
\node [below] at (3,2.8) {$(3,3)$};

\node [below] at (2,1.8) {$(2,2)$};
\node [below] at (4,1.8) {$(4,2)$};

\node [below] at (1,.8) {$(1,1)$};
\node [below] at (3,.8) {$(3,1)$};
\node [below] at (5,.8) {$(5,1)$};

\node [below] at (6,0) {$(6,0)$};
\node [below] at (4,0) {$(4,0)$};
\node [below] at (2,0) {$(2,0)$};
\node [below] at (0,0) {$(0,0)$};

\end{scriptsize}
\end{tikzpicture}

\begin{tikzpicture} [xscale=1,yscale =1]

\node [below] at (3,3.2) {    }; 
 
\draw (0,0)--(1,1)--(2,2)--(3,3);
\draw (2,0)--(3,1)--(4,2);
\draw (4,0)--(5,1);

\draw (3,3)--(4,2)--(5,1)--(6,0);
\draw (2,2)--(3,1)--(4,0);
\draw (1,1)--(2,0);

\draw (0,0)--(2,0)--(4,0)--(6,0);
\draw (1,1)--(3,1)--(5,1);

\draw (2,2) -- (4,2);

\begin{scriptsize}
\node  [above] at (1,.2) {$T_{3,1}$};  
\node  [above] at (3,.2) {$T_{3,2}$};
\node  [above] at (5,.2) {$T_{3,3}$};
\node  [above] at (2,1.2) {$T_{2,1}$};
\node  [above] at (4,1.2) {$T_{2,2}$};
\node  [above] at (3,2.3) {$T_{1,1}$};

\node [below] at (6,0) {$ $};
\node [below] at (4,0) {$ $};
\node [below] at (2,0) {$ $};
\node [below] at (0,0) {$ $};

\end{scriptsize}

\end{tikzpicture}
\\  \hline
\end{tabular}

\end{center}
\end{figure}

 Recall that electric circuit theory introduces four transformations - series, parallel, $\Delta$-Y, and Y-$\Delta$ that preserve (electric) resistance. The series transformation, for example, takes a labeled 3-vertex subgraph with two edges and transforms (or replaces) it with  a 2-vertex subgraph whose single edge is labeled with the sum of the labels (resistances) of the two edges of the 3-vertex subgraph. Thus, this transformation preserves resistance.
  Equation \ref{equ:DeltaY} and Figure  \ref{fig:DeltaY} review the $\Delta$-Y and Y-$\Delta$ circuit transformations which transform initial circuits into simpler circuits with equivalent electric resistances.  The figure establishes several conventions about functional notation and labels used throughout the paper.
 \begin{equation}\label{equ:DeltaY}
 \Delta(x,y,z) = \frac{xy}{x+y+z}, \qquad
 Y(a,b,c) = \frac{ab+bc+ca}{a}.
 \end{equation}

 Throughout the paper, we refer to the edges of the triangles using the notation $T_{X,r,d}$ for some $X \in \{L, R, B\}$ which respectively stand for the left, right, and base side of the triangle. Note that we abuse notation; for example,   $L$ refers both to the left edge and to the label of the left edge; however,  meaning will always be clear from context.  The notation for the Y legs is similarly constructed: e.g.   $Y_{12,1,2}$ is the 12 o'clock leg arising from applying a Y-$\Delta$ transformation to $T_{1,2}.$ 
 
 
 \begin{figure}[ht!]
\begin{center}\caption{Definition and orientation conventions for the 
 $\Delta$-Y and Y-$\Delta$ circuit transformations.   $S, T, U$ are vertex labels; $L, R, B$ are edge labels standing for left, right, and base edge. The $Y$ legs are labeled with an intuitive clock notation. }\label{fig:DeltaY}
\begin{tabular} {|c|}

\hline
\begin{tikzpicture}[yscale=.4,xscale=1]

\draw (0,0)--(2,4)--(4,0)--(0,0) ;
\draw (5,0)--(7,2)--(7,4)--(7,2)--(9,0);

\node [below] at (7,4.2) { };

\node [above] at (7,4) {T};
\node [left] at (5,0) {S};
\node [right] at (9,0) {U};

\node [above] at (2,4) {T};
\node [left] at (0,0) {S};
\node [right] at (4,0) {U};

\node [right] at (7,3) {$Y_{12}$};
\node [right] at (8.1,1.1) {$Y_{4}$};
\node [right] at (5.6,.3) {$Y_{8}$}; 

\node [right] at (3,2.2) {$R$};
\node [right] at (2,-.6) {$B$};
\node [right] at (.5,2) {$L$}; 

\node [right] at (6.5,-2) {$Y_{12}=\Delta(L,R,B)$};
\node [right] at (6.5,-3) {$Y_{4}=\Delta(R,B,L)$};
\node [right] at (6.5,-4) {$Y_{8}=\Delta(B,L,R)$}; 

\node [right] at (.75, -2) {$R=Y(Y_8,Y_{12},Y_4)$ };
\node [right] at (.75,-3) {$B=Y(Y_{12},Y_{4},Y_8)$ };
\node [right] at (.75,-4) {$L=Y(Y_4,Y_8,Y_{12})$ }; 
\end{tikzpicture}
\\ \hline
\end{tabular}
\end{center}
\end{figure}

 Equation \eqref{equ:DeltaY} implies the following scaling properties of these functions.
\begin{equation}\label{equ:scaling} \Delta(jx, jy, jz) = j \Delta(x,y,z), \qquad    
    Y(jx,jy,jz) = j Y(x,y,z), \text{ for any real $j$}.
\end{equation}

 The reduction algorithm presented next is based on proofs in \cite[pg. 18]{Barrett9} connected with linear 2-trees and 
 accompanied by figures which are adapted and expanded here for application to the $n$-grid. The algorithm presented in Definition \ref{def:rr} has five steps which, in the sequel, will be referred to as Steps A-E. These steps are illustrated in the three rows of Figure \ref{fig:12panels}.

\begin{definition}\label{def:rr}   Given a (parent) $n$-grid with labeled edges, the   
\emph{row-reduction} of this labeled $n$-grid (to a labeled (child) $n-1$-grid) 
refers to the sequential performance of the following steps, the steps being illustrated   by Figure \ref{fig:12panels}.
\begin{itemize}
    \item Step A: Start with an \textit{$n$-grid} (Figure \ref{fig:12panels} illustrates with $n=3 \text{ and } 2.$). 
    \item Step B: Apply a $\Delta$--Y transformation to each upright triangle (a 3-loop) resulting in a grid of $n$ rows of out-stars.
    \item Step C: Ignore the corner tails,  edges dotted in Step B with a vertex of degree one, since they do not affect the resistance labels of edges in the reduced two grid shown in Step 5. (However, these corner tails are useful for computing certain effective resistances as shown in 
    \cite{Barrett0,Evans2022}).
    \item Step D: Perform series transformations on all consecutive pairs of boundary edges (i.e., the dashed edges in Step C).
    \item Step E: Apply Y--$\Delta$ transformations to any remaining out-stars, transforming them into loops.
\end{itemize}   
 \end{definition}

Since Steps B-E involve functions that produce equivalent electric resistances, the resistance between two corner vertices in the resulting $n-1$ grid is equal to the resistance between two corner vertices in the original $n$ grid.

 \begin{figure}[ht!]
\begin{center}
\caption{Illustration of two row reductions, Definition \ref{def:rr}, on  a 3-grid whose edges are uniformly labeled with 1.}\label{fig:12panels}
\begin{tabular}{||c|c|c|c|c||}
\hline
\begin{tikzpicture}[yscale=.4,xscale=.3]

\node [below] at (3,3.2) {    }; 
 
\draw (0,0)--(1,2)--(2,0)--(0,0);
\draw (2,0)--(3,2)--(4,0)--(2,0);
\draw (4,0)--(5,2)--(6,0)--(4,0); 
\draw (1,2)--(2,4)--(3,2)--(1,2);
\draw (3,2)--(4,4)--(5,2)--(3,2);
\draw (2,4)--(3,6)--(4,4)--(2,4);

\node [below] at (3,6.2) {     };
\node [below] at (3,-.5) {Step A1};
\end{tikzpicture} 
& 
\begin{tikzpicture}[xscale=.3,yscale=.4] 
\draw  (1,1)--(1,2)--(2,3)--(2,4)--(3,5);
\draw  (5,1)--(5,2)--(4,3)--(4,4)--(3,5);
\draw (1,1)--(2,0)--(3,1)--(3,2)--(2,3);
\draw (3,1)--(4,0)--(5,1)--(5,2)--(4,3)--(3,2); 
 
\draw [dashed] (6,0)--(5,1);
\draw [dashed] (0,0)--(1,1);
 \draw [dashed] (3,5)--(3,6);
 
\node [below] at (3,-.5) {Step B1};
\end{tikzpicture}&
\begin{tikzpicture}[yscale=.4,xscale=.3]
 
\draw [dashed] (1,1)--(1,2)--(2,3)--
(2,4)--(3,5);
\draw [dashed] (5,1)--(5,2)--(4,3)--(4,4)--(3,5);
\draw [dashed] (1,1)--(2,0)--(3,1)--(4,0)--(5,1);
\draw (3,1)--(3,2)--(2,3)--(3,2)--(4,3);

\node [below] at (3,-.5) {Step C1};
 
\end{tikzpicture}&
\begin{tikzpicture}[yscale=.4,xscale=.3]
 
\draw (1,1)--(2,3)--(3,5);
\draw (5,1)--(4,3)--(3,5);
\draw (1,1)--(3,1)--(5,1);
\draw [dotted] (2,3)--(3,2)--(4,3);
\draw [dotted] (3,2)--(3,1); 
 
 \node [below] at (3,-.5) {Step D1};
\end{tikzpicture}&
\begin{tikzpicture}[yscale=.4,xscale=.3]
 
\draw (1,1)--(2,3)--(3,5);
\draw (5,1)--(4,3)--(3,5);
\draw (1,1)--(3,1)--(5,1);
\draw (2,3)--(4,3)--(3,1)--(2,3); 
 
 \node [below] at (3,-.5) {Step E1};
\end{tikzpicture} 
\\

\hline

\begin{tikzpicture}[yscale=.4,xscale=.3]

\draw [dotted] (0,0)--(1,2)--(2,0)--(0,0);
\draw  [dotted] (2,0)--(3,2)--(4,0)--(2,0);
\draw  [dotted] (4,0)--(5,2)--(6,0)--(4,0);
\draw  [dotted] (1,2)--(2,4)--(3,2)--(1,2);
\draw  [dotted] (3,2)--(4,4)--(5,2)--(3,2);
\draw  [dotted]  (2,4)--(3,6)--(4,4)--(2,4);

\node [below] at (3,0) {Step A2};
\node [above] at (3,6) {All edge labels are 1};

\end{tikzpicture}&

\begin{tikzpicture}[yscale=.4,xscale=.3]
 
\draw [dotted] (0,0)--(1,1)--(1,2)--(1,1)--(2,0);
\draw [dotted] (2,0)--(3,1)--(3,2)--(3,1)--(4,0);
\draw [dotted] (4,0)--(5,1)--(5,2)--(5,1)--(6,0);
\draw [dotted] (1,2)--(2,3)--(2,4)--(2,3)--(3,2);
\draw [dotted] (3,2)--(4,3)--(4,4)--(4,3)--(5,2);
\draw [dotted] (2,4)--(3,5)--(3,6)--(3,5)--(4,4);
 
\node [below] at (3,-.5) {Step B2};
\node [above] at (3,6) {All edge labels are $\frac{1}{3}$};

\end{tikzpicture}&

\begin{tikzpicture}[yscale=.4,xscale=.3]

\draw [dotted]        (1,1)--(1,2)--(1,1)--(2,0);
\draw [dotted] (2,0)--(3,1)--(3,2)--(3,1)--(4,0);
\draw [dotted] (4,0)--(5,1)--(5,2)--(5,1)       ;
\draw [dotted] (1,2)--(2,3)--(2,4)--(2,3)--(3,2);
\draw [dotted] (3,2)--(4,3)--(4,4)--(4,3)--(5,2);
\draw [dotted] (2,4)--(3,5)--(4,4);

 \node [below] at (3,-.5) {Step C2};
\node [above] at (3,6) {All edge labels are $\frac{1}{3}$};
 
\end{tikzpicture}&

\begin{tikzpicture}[yscale=.4,xscale=.3]

\draw [dotted] 	(1,1)--(2,3)--(3,5)--(4,3)--(5,1)--(1,1);
\draw [dotted] (3,1)--(3,2)--(2,3);
\draw [dotted] (3,2)--(4,3);

\node [left] at (1.5,2) {$\frac{2}{3}$};
\node [left] at (2.5,4) {$\frac{2}{3}$};
\node [right] at (3.5,4) {$\frac{2}{3}$};
\node [right] at (4.5,2) {$\frac{2}{3}$};
\node [below] at (2,1) {$\frac{2}{3}$};
\node [below] at (4,1) {$\frac{2}{3}$};

\node [right] at (3,1.5) {$\frac{1}{3}$};
\node [right] at (2,2.5) {$\frac{1}{3}$};
\node [right] at (3,2.5) {$\frac{1}{3}$};

\node [below] at (3,-.5) {Step D2};
\node [above] at (3,6) {         };

\end{tikzpicture}&

\begin{tikzpicture}[yscale=.4,xscale=.3] *
\draw [dotted] (1,1)--(2,3)--(3,1)--(1,1);
\draw [dotted] (2,3)--(3,5)--(4,3)--(2,3);
\draw [dotted] (3,1)--(4,3)--(5,1)--(3,1);
\draw [dotted] (2,3)--(4,3)--(3,1)--(2,3);

 \node [above] at (3,3) {$1$};
\node [left] at (2.5,2) {$1$};
\node [right] at (3.5,2) {$1$};

\node [left] at (1.5,2) {$\frac{2}{3}$};
\node [left] at (2.5,4) {$\frac{2}{3}$};
\node [right] at (3.5,4) {$\frac{2}{3}$};
\node [right] at (4.5,2) {$\frac{2}{3}$};
\node [below] at (2,1) {$\frac{2}{3}$};
\node [below] at (4,1) {$\frac{2}{3}$};

\node [below] at (3,-.5) {Step E2};
\node [above] at (3,6) { };
\end{tikzpicture} 

\\ \hline

\begin{tikzpicture}[yscale=.4,xscale=.3]

\draw [dotted] (1,1)--(2,3)--(3,1)--(1,1);
\draw [dotted] (2,3)--(3,5)--(4,3)--(2,3);
\draw [dotted] (3,1)--(4,3)--(5,1)--(3,1);
\draw [dotted] (2,3)--(4,3)--(3,1)--(2,3);

\node [above] at (3,3) {$1$};
\node [left] at (2.5,2) {$1$};
\node [right] at (3.5,2) {$1$};

\node [left] at (1.5,2) {$\frac{2}{3}$};
\node [left] at (2.5,4) {$\frac{2}{3}$};
\node [right] at (3.5,4) {$\frac{2}{3}$};
\node [right] at (4.5,2) {$\frac{2}{3}$};
\node [below] at (2,1) {$\frac{2}{3}$};
\node [below] at (4,1) {$\frac{2}{3}$}; 

\node [below] at (3,0) {Step A3};
\node [above] at (3,4.2) {   };

\end{tikzpicture}&

\begin{tikzpicture}[yscale=.4,xscale=.3]

\draw [dotted] (1,1)--(2,2)--(2,3)--(2,2)--(3,1);
\draw [dotted] (2,3)--(3,4)--(3,5)--(3,4)--(4,3);
\draw [dotted] (3,1)--(4,2)--(4,3)--(4,2)--(5,1);

\node [left] at (1.5,1.5) {$\frac{4}{21}$};
\node [left] at (2,2.5) {$\frac{2}{7}$};
\node [left] at (2.5,3.5) {$\frac{2}{7}$};
\node [left] at (3,4.5) {$\frac{4}{21}$};
\node [right] at (4.5,1.5) {$\frac{4}{21}$};

\draw [dotted]        (2,2)--(2,3)--(2,2)--(3,1);
\draw [dotted] (2,3)--(3,4)--(4,3);
\draw [dotted] (3,1)--(4,2)--(4,3)--(4,2)        ;

 \node [left] at (2,2.5) {$\frac{2}{7}$};
\node [left] at (2.5,3.5) {$\frac{2}{7}$};

\node [right] at (3.5,3.5) {$\frac{2}{7}$};
\node [right] at (4,2.5) {$\frac{2}{7}$};
\node [right] at (2,1.5) {$\frac{2}{7}$};
\node [right] at (3,1.5) {$\frac{2}{7}$};

\node [below] at (3,1) {Step B3};
\node [above] at (3,4.2) {     };

\end{tikzpicture}&

\begin{tikzpicture}[yscale=.4,xscale=.3]

\draw [dotted]        (2,2)--(2,3)--(2,2)--(3,1);
\draw [dotted] (2,3)--(3,4)--(4,3);
\draw [dotted] (3,1)--(4,2)--(4,3)--(4,2)        ;

 \node [left] at (2,2.5) {$\frac{2}{7}$};
\node [left] at (2.5,3.5) {$\frac{2}{7}$};

\node [right] at (3.5,3.5) {$\frac{2}{7}$};
\node [right] at (4,2.5) {$\frac{2}{7}$};
\node [right] at (2,1.5) {$\frac{2}{7}$};
\node [right] at (3,1.5) {$\frac{2}{7}$};

\node [below] at (3,-.5) {Step C3};
\node [above] at (3,4.2) { };

\end{tikzpicture}&

\begin{tikzpicture}[yscale=.4,xscale=.3]

\draw [dotted] (2,2)--(3,4)--(4,2)--(2,2);

\node [left] at (2.5,3) {$\frac{4}{7}$};
\node [right] at (3.5,3) {$\frac{4}{7}$};
\node [below] at (3,2) {$\frac{4}{7}$};
 
\node [below] at (3,-.5) {Step D3};
\node [above] at (3,4.2) {  };
\end{tikzpicture}
&

\\ \hline
 
\end{tabular}
\end{center}
 \end{figure}

 Figure \ref{fig:12panels} also   illustrates \textit{repeated row-reduction}. The initial 3 grid uniformly labeled 1,  transforms first to a 2-grid and then a 1-grid. The edge-labels throughout Figure \ref{fig:12panels} are justified by \eqref{equ:DeltaY},  for example,
 $\frac{4}{21} = \Delta(\frac{2}{3},\frac{2}{3},1),$  
  and similarly, 
  $\frac{2}{7} = \Delta(1, \frac{2}{3},\frac{2}{3}).$  

  The sequence of steps in the algorithm naturally generate several single functions whose arguments are edge-label values in the parent grid and whose return-value is an edge-label in the child grid.  To illustrate, for $1 \le r \le n-1,$ define a function $\mathcal{P}$ which returns the value of $T_{L,r,1}$ in a (child) reduced grid as a function of six edge values in the original parent grid. ($\mathcal{P}$ stands for \textit{perimeter} and helps distinguish between the function for the left edge on the perimeter of the grid and the non-boundary left edges).

\begin{equation}\label{equ:perimeteredge}
T_{L,r,1}= \mathcal{P}(T_{r,1},T_{r+1,1}) =Y_{8,r,1}+Y_{12,r+1,1}=
\Delta(T_{B,r,1},T_{L,r,1},T_{R,r,1})+
\Delta(T_{L,r+1,1},T_{R,r+1,1},T_{B,r+1,1}).
\end{equation}

While the notation can be made more precise using supersripts to indicate the parent and once-reduced grid, we simplify notation by using the  convention that    equations similar to  \eqref{equ:perimeteredge} are interpreted so that triangle-edge values on the  left-hand side refer to labels in the the child (that is, the once-reduced $n$-) grid while those on the right-hand side refer to labels in the the parent grid. Moreover, for readability, we have also indicated the arguments of $\mathcal{R}$ using triangles.  Note that, for example,  if we let $r=1$ in \eqref{equ:perimeteredge} we obtain an equation justifying the $\frac{4}{7}$ label in Step D3. 

 Only four local-edge functions are needed for all computations. The other three local edge functions, $  \mathcal{L}, \mathcal{R},$ and $\mathcal{B}$ for the labels of a non-boundary left edge, right-edge, and base edge, respectively, with the convention just indicated that the left-hand and right-hand side refer to items in the child (reduced) and parent grids, are as follows.

For a given $n$-grid for $1 \le r\le n-2, 1 \le d \le r-1,$ the function $\mathcal{R}$ gives the label of a base-edge in a once-reduced $n$ grid. 
\begin{multline}\label{equ:baseedge}
\mathcal{B}(T_{r+2,d+1} 
            T_{r+1,d},
            T_{r+1,d+1}) = 
T_{r,d,B} =
    Y(\Delta(T_{L,r+2,d+1},
            T_{R,r+2,d+1},
            T_{B,r+2,d+1}),
            \Delta(T_{R,r+1,d},
            T_{B, r+1,d},
            T_{L,r+1,d}),\\
             \Delta(T_{B, r+1,d+1},
        T_{L, r+1,d+1},
            T_{R, r+1,d+1})).
\end{multline}
.

For an $n$-grid for 
$1 \le r \le n-2, 1 \le d \le r-1$
the function $\mathcal{R}$ gives 
the label of a right-edge in a once-reduced $n$ grid. 
\begin{multline}\label{equ:rightedge}
\mathcal{R}(T_{r,d} 
            T_{r,d+1},
            T_{r+1,d})
=
T_{r,d,R} =
    Y(\Delta(T_{r,d,R},
            T_{r,d,B},
            T_{r,d,L}),
      \Delta(T_{r,d+1,B},
        T_{r,d+1,L},
            T_{r,d+1,R},\\      
      \Delta(T_{r+1,d,L},
            T_{r+1,d,R},
            T_{r+1,d,B})).
\end{multline}

For a given $n$-grid with  
$
    2 \le r \le n-1, 2 \le d \le r.
$
the function $\mathcal{L}$ gives the label of a non-boundary left edge in a one reduced $n$-grid.
 \begin{multline}\label{equ:leftedge}
\mathcal{L}(T_{r,d-1} 
            T_{r,d},
            T_{r+1,d})
T_{r,d,L} =
    Y(\Delta(T_{R,r,d-1},
            T_{B,r,d-1},
            T_{L,r,d-1}),
     \Delta(T_{B,r,d},
        	T_{L,r,d},
            T_{R,r,d}),\\  
            \Delta(T_{L,r+1,d},
            T_{R,r+1,d},
            T_{B,r+1,d})).
        \end{multline}

In the sequel we will refer to \eqref{equ:perimeteredge}-\eqref{equ:leftedge} as the \textit{the four local functions.} As illustrated with $\mathcal{P},$ the proof that these functions accomplish their stated goals is provided by Definition \ref{def:rr}.

The four local functions justify the following approach to proofs used throughout the paper.
Traditionally, the combinatoric Laplacian is used to justify computational identities. But the combinatoric Laplacian is a global method involving the entire graph while the four local functions
are computed locally (with at most 9 edge-labels as function arguments). Consequently, if a proof reduces to verification that two (rational) functions are equal, we will suffice with simply stating \textit{we may verify,} the verification being accomplishable either manually or by algebraic software. Although some proofs can easily be done without software, the narrative flows more naturally if the computational details of the proofs are always left to verification.

\section{The Conjectures}\label{sec:conjectures}

  Figure \ref{fig:12panels} can be used to  motivate the seven conjectures presented in this paper.

\begin{example}\label{exa:tailconjecture} Figure \ref{fig:12panels} starts in Steps A1 and A2 with a  3-grid  uniformly labeled 1,  and then applies two repeated row reductions resulting in the 1-grid of Step D3 with edge label $\frac{4}{7}.$ But,
$$
    \frac{4}{7}=0.5714  \approxeq 0.5518 = \frac{3}{2} \frac{1}{e}, 
$$
with an error of 3.55\%. 

Continuing this example, the ignored tail  in Step B3 has   label $\frac{4}{21}.$ We have  
$$
    \frac{4}{21} =0.1905 \approxeq  0.1839 = \frac{1}{2}\frac{1}{e}.
$$
\end{example}

Conjecture 1 will be formulated in terms of tails rather than edges. The reason for this is that  in a 1-grid arising from repeated row-reduction, there is a nice relationship between the resistance distance of the edges of the 1 grid and the resistance distance of the ignored tail arising in the process of row reducing a 2-grid to a 1-grid. But in general, the edge labels of $c$-grids, $c >1,$ (arising from applying $n-c$ repeated row reductions to an initial $n$-grid uniformly labeled 1) do not have a nice relationship with tails arising in the row reduction process.  To state the conjectures, we therefore need to create a notation for the resistance distance of tails. 

\begin{definition}
    \label{def:tail} $t(n,i)$ indicates the label of any tail in Step 2 of the $n-i$-th row reduction of an original $n$-grid. \end{definition}

\begin{example}\label{exa:tail} In Figure \ref{fig:12panels}, $t(3,2)=\frac{4}{21}.$ Notice that $t(3,2)$ depends on the labels in the initial $n$-grid.
\end{example} 

Conjecture \ref{con:con1} generalizes and formalizes Example \ref{exa:tailconjecture}.  
 
\begin{con}\label{con:con1}  Consider a family of initial $n$-grids whose edges are uniformly labeled one. As $n$ goes to infinity,
$$
        t(n,1) \approxeq \frac{1}{2e}.
$$
\end{con}

\begin{con}\label{con:con2}  Fix $i \ge 1.$ With the conventions of Conjecture \ref{con:con1},  as $n$ goes to infinity,
$$
        t(n,i) \approxeq \frac{1}{i} \frac{1}{2e}.
$$
\end{con}

 Table \ref{tab:con1} provides supportive numerical evidence for both conjectures.

\begin{table}[ht!]
\caption{Numerical evidence for Conjectures \ref{con:con1} and 
\ref{con:con2}. }\label{tab:con1}.
\begin{tabular}{||c||c|c|c|c|c|c|c|c|c||} 
\hline \hline

$i=$&
$Actual\; value\; of\; t(150,i)$&
$Conjectured \; value$&
$Numerical\; Error\; (Ratio-1) $\\
\hline
$1$&$0.183776286$&$\frac{1}{2e} = 0.183939721$&$0.0009$\\
$2$&$0.091888053$&$\frac{1}{2} \frac{1}{2e} = 0.09196986$&$0.0009$\\
$3$&$0.061258443$&$\frac{1}{3} \frac{1}{2e} = 0.06131324$&$0.0009$\\
$4$&$0.045943311$&$\frac{1}{4} \frac{1}{2e} = 0.04598493$&$0.0009$\\
$5$&$0.036753773$&$\frac{1}{5} \frac{1}{2e} = 0.036787944$&$0.0009$\\
$6$&$0.030626823$&$\frac{1}{6} \frac{1}{2e} = 0.03065662$&$0.001$\\
$7$&$0.026249708$&$\frac{1}{7} \frac{1}{2e} = 0.026277103$&$0.001$\\
$8$&$0.02296602$&$\frac{1}{8} \frac{1}{2e} = 0.022992465$&$0.0012$\\
$9$&$0.020411064$&$\frac{1}{9} \frac{1}{2e} = 0.020437747$&$0.0013$\\
$10$&$0.018366$&$\frac{1}{10} \frac{1}{2e} = 0.018393972$&$0.0015$\\

\hline \hline
\end{tabular}

\end{table}
The next four conjectures study patterns in the ratios of certain edge-labels in the $c$ grid resulting from applying $n-c$ row reductions to an $n$ grid whose edges are uniformly labeled 1. The conjectures should be read as defining  four functions, $r_{R,L}, r_{B,L}, x,y$   and then conjecturing that the ratio of the labels specified in each conjecture have the indicated value of these functions. In the sequel, we will refer to these four functions as \textit{the (four) edge-ratio functions}.
 
\begin{con}\label{con:con3}
Fix $c \ge 1.$ With the conventions of Conjecture \ref{con:con1}, as $n$ goes to infinity,
  \begin{equation}\label{equ:edgefactorsright}
	\frac{T_{R,r,d}}{T_{L,r,d}} \approxeq \frac{2(r-d)+1}{2d-1} \doteq r_{R,L}(r,d),
	\qquad   1 \le d \le r \le c.    
\end{equation}
\end{con} 

\begin{con}\label{con:con4}
Fix $c \ge 1.$ With the conventions of Conjecture \ref{con:con1}, as $n$ goes to infinity,
\begin{equation}\label{equ:edgefactorsbase} 
	\frac{T_{B,r,d}}{T_{L,r,d}} \approxeq 
	\frac{2(c-r)+1}{2d-1} \doteq r_{B,L}(c,r,d), \qquad
	  1 \le d \le r \le c.    
\end{equation}
\end{con} 

\begin{con}\label{con:con5}
Fix $c \ge 1.$ With the conventions of Conjecture \ref{con:con1}, as $n$ goes to infinity,
\begin{equation}\label{equ:edgefactorsleft}
	 \frac{T_{L,r,1}}{T_{L,r-1,1}} \approxeq
	\frac{r-1}{2r-1}\frac{2(c-r)+3}{(c-r)+1} \doteq x(c,r), \qquad
	  2 \le r \le c.  				 
\end{equation}
\end{con} 

\begin{con}\label{con:con6}
Fix $c \ge 1.$ With the conventions of Conjecture \ref{con:con1}, as $n$ goes to infinity,
\begin{equation}\label{equ:edgefactorshorizontal}
	\frac{T_{L,r,d}}{T_{L,r,d-1}} \approxeq 
	\frac{d-1}{2d-3} \frac{2(r-d)+3}{(r-d)+1} \doteq y(r,d), \qquad
	2 \le d \le r \le c.   
\end{equation}
\end{con}

Table \ref{tab:con2} presents 
 numerical evidence for Conjectures \ref{con:con3}-\ref{con:con6}.

\begin{table}[ht!]
\caption
{Numerical evidence for Conjectures \ref{con:con3}-\ref{con:con6} based on 140 consecutive reductions of an initial 150-grid uniformly labeled 1.}\label{tab:con2}
\begin{tabular}{||c|c|c|c|c||} 
\hline \hline
\text{Edge ratio}&\text{Actual value of edge ratios }&\text{Equation reference}&\text{Predicted edge ratio Value}&\text{Error}\\
\hline
$\frac{T_{R,4,1}}  {T_{L,4,1}}$ \footnotemark[1]&$7.0010150391$&$\eqref{equ:edgefactorsright}$&$r_{R,L}(4,1)=7$&$-0.00014$\\
$\frac{T_{R,4,2}}{T_{L,4,2}}$&$1.6667476811$&$\eqref{equ:edgefactorsright}$&$r_{R,L}(4,2)=\frac{5}{3}$&$-0.00005$\\
$\frac{T_{R,6,4}}{T_{L,6,4}}$&$0.7142421068$&$\eqref{equ:edgefactorsright}$&$r_{R,L}(6,4)=\frac{5}{7}$&$-0.00006$\\
\;&\;&\;&\;&\;\\
$\frac{T_{B,1,1}}{T_{L,1,1}}$&$19.0125047453$&$\eqref{equ:edgefactorsbase}$&$r_{B,L}(10,1,1)=19$&$-0.00066$\\
$\frac{T_{B,5,3}}{T_{L,5,3}}$&$2.20046025$&$\eqref{equ:edgefactorsbase}$&$r_{B,L}(10,5,3)=\frac{11}{5}$&$-0.00021$\\
$\frac{T_{B,6,4}}{T_{L,6,4}}$&$1.2858046383$&$\eqref{equ:edgefactorsbase}$&$r_{B,L}(10,6,4)=\frac{9}{7}$&$-0.00007$\\
\;&\;&\;&\;&\;\\
$\frac{T_{L,2,1}}{T_{L,1,1}}$&$0.7036757582$&$\eqref{equ:edgefactorsleft}$&$x(10,2)=\frac{1}{3}\frac{19}{9}$&$0.00004$\\
$\frac{T_{L,3,1}}{T_{L,2,1}}$&$0.8499699096$&$\eqref{equ:edgefactorsleft}$&$x(10,3)=\frac{2}{5}\frac{17}{8}$&$0.00004$\\
$\frac{T_{L,4,1}}{T_{L,3,1}}$&$0.9183432041$&$\eqref{equ:edgefactorsleft}$&$x(10,4)=\frac{3}{7}\frac{15}{7}$&$0.00003$\\
\;&\;&\;&\;&\;\\
$\frac{T_{L,4,2}}{T_{L,4,1}}$&$2.3334191779$&$\eqref{equ:edgefactorshorizontal}$&$y(4,2)=\frac{7}{3}$&$-0.00004$\\
$\frac{T_{L,4,3}}{T_{L,4,2}}$&$1.6667476811$&$\eqref{equ:edgefactorshorizontal}$&$y(4,3)=\frac{5}{3}$&$-0.00005$\\
$\frac{T_{L,5,4}}{T_{L,5,3}}$&$1.500093071$&$\eqref{equ:edgefactorshorizontal}$&$y(5,4)=\frac{3}{2}$&$-0.00006$  \\

\hline \hline
\end{tabular}
\footnotetext[1]{The interpretation of the first table row   states that the quotient of the labels of 
the right  and left edge of triangle $T_{4,1}$
in the 10 grid arising from applying 140 row reductions to an initial 150-grid uniformly labeled 1 is $7.001 \dotsc.$  However,  \eqref{equ:edgefactorsright}   predicts a ratio of $r_{R,L}(10,4,1)=7,$ implying a very small error.}
%
\end{table}

  As mentioned earlier, \cite{Barrett9}, in introducing the triangular grid also presented  a conjecture \cite[Conjecture 7.8]{Barrett9} that if $r_n$ denotes the resistance distance between any two corners (degree two vertices) of the $n$-grid, then as $n$ approaches infinity  $e^{r_{n+1}}-e^{r_n} =C,$ for some constant $C.$

It is straightforward to show that the resistance between any two degree-2 nodes in an  $n$-grid is exactly equal to
 $2 \times \sum_{i=1}^n t(n,i).$ 
      But by Conjecture \ref{con:con1}, $t(n,i) \approxeq \frac{1}{i} t(n,1)$ provided $n$ is large enough and $i$ is independent of $n$. Motivated by these considerations, Evans and I were able to refine the conjectured asymptotic limit stated in \cite{Barrett9}.
 
 \begin{con}\label{con:con7} 
 $$\text{The resistance distance between two corner nodes of the $n$-grid} \approxeq
 \sum_{i=1}^n \frac{1}{i}.$$ 
\end{con}.

None of these conjectures will be proven in this paper. Athough they are interesting, they are presented because they   naturally suggest other  avenues of exploration. In  the remainder of this paper we study defining the labels of an initial $n$-grid using the four edge-ratio functions. This contrasts with most examples in the literature which almost always present circuits  uniformly labeled with ones. 

This study will result in three main theorems which collectively state that both the initial $n$-grid defined by the four edge-ratio functions as well as all $n-i$ grids arising from $i, 1 \le i \le n-1,$ repeated row reductions of the initial $n$ grid possess vertical and rotational symmetry and satisfy the relationships implied by the four edge-ratio functions.  The tails arising in the row reductions manifest behavior similar to that stated in Conjecture \ref{con:con2}.

These theorems shed light on the four edge-ratio functions. They are local algebraic relationships which correspond to the global symmetries present in the initial and reduced grids.  
\section{A Motivating Example}\label{sec:motivatingexample}

This section motivates the idea of an initial $c$-grid whose edges are not uniformly labeled 1. The section provides definitions, a worked out example, and notes various patterns which will be formalized in the three main theorems of the paper.

The $c$ grid considered in this section uses the four edge ratio functions to label edges. More specifically, given a $c$-grid we use the following equations to define labels.
\begin{equation}\label{equ:tl11}
T_{L,1,1}=1.
\end{equation}.    
\begin{align}\label{equ:edgeratiorelationships}
T_{L,r,1} &= x(c,r)T_{L,r-1,1}, &\qquad 2 \le r \le c  \nonumber  \\
T_{L,r,d} &= y(r,d) T_{L,r,d-1}, &\qquad 2 \le d  \le r.\nonumber  \\ 
T_{R,r,d} &= r_{R,L}(r,d) T_{L,r,d}, &\qquad 1 \le r \le c, 1 \le d \le r  \\ 
T_{B,r,d} &= r_{B,L}(c, r,d) T_{L,r,d}, &\qquad 1 \le r \le c, 1 \le d \le r. \nonumber 
 \end{align}
In the sequel, we will refer to \eqref{equ:edgeratiorelationships} as \textit{the (four) edge-ratio relationships.}
Figure \ref{fig:3rgrid} illustrates the application of \eqref{equ:tl11}-\eqref{equ:edgeratiorelationships}   to completely label a $3$-grid.

\begin{figure}[ht!]
\begin{center}
\caption{A $3$ grid labeled using  \eqref{equ:tl11}-\eqref{equ:edgeratiorelationships}}. 
\label{fig:3rgrid}
\begin{tabular}{|c|}

\hline
 
\begin{tikzpicture}[xscale=1.25,yscale=1.25]
 
\draw (0,0)--(1,1)--(2,2)--(3,3);
\draw (2,0)--(3,1)--(4,2);
\draw (4,0)--(5,1);

\draw (3,3)--(4,2)--(5,1)--(6,0);
\draw (2,2)--(3,1)--(4,0);
\draw (1,1)--(2,0);

\draw (0,0)--(2,0)--(4,0)--(6,0);
\draw (1,1)--(3,1)--(5,1);
\draw (2,2)--(4,2);

\node  [above left] at (2.5,2.5) {1}; 

\node  [above left] at (1.5,1.5) {$\frac{5}{6}$};
\node  [above left] at (.5,.5) {$1$};

\node  [above left] at (3.6,1.5) {$\frac{5}{2}$};
\node  [above left] at (2.6,.5) {$\frac{5}{2}$};
\node  [above left] at (4.6,.5) {$5$};

\node  [above right] at (3.5,2.5) {$1$};
\node  [above] at (3,2) {$5$};

\node  [above right] at (2.5,1.5) {$\frac{5}{2}$};
\node  [above] at (2,1) {$\frac{5}{2}$};

\node  [above right] at (4.5,1.5) {$\frac{5}{6}$};
\node  [above] at (4,1) {$\frac{5}{2}$};

\node  [above right] at (1.5,.5) {$5$};
\node  [above] at (1,0) {$1$};

\node  [above right] at (3.5,.5) {$\frac{5}{2}$};
\node  [above] at (3,0) {$\frac{5}{6}$};

\node  [above right] at (5.5,.5) {$1$};
\node  [above] at (5,0) {$1$};


\end{tikzpicture}\\
\hline
\end{tabular}
\end{center}
\end{figure}

  An important point, is that the four edge-ratio functions can be used to describe the ratio of any two edges in the 3-grid. To illustrate this, we introduce the heuristic of \textit{travel} along the edges of the $c$-grid. Heuristically, the edge ratio functions $x,y,r_{R,L}, r_{B,L}$ are used to describe vertical, horizontal, and rotational travel, respectively. The next two examples clarify how this concept of \textit{travel} will be used. 
\begin{example} Referring to Figure \ref{fig:3rgrid}
$$T_{R,3,3} =r_{R,L}(3,3) \cdot \biggl( y(3,2) y(3,3) \biggr) \cdot \biggl(x(3,2) x(3,3) \biggr) \cdot  T_{L,1,1}.$$
In this equation we start with $T_{L,1,1}$ then \textit{travel} down the left side of the grid 
to $T_{L,3,1}$  by multiplying $T_{L,1,1}$ by $x(3,2) x(3,3),$ then \textit{travel} horizontally across the bottom row from $T_{L,3,1}$ to $T_{L,3,2}$ by multiplying by $ y(3,2) y(3,3)$ and finally travel from $T_{L,3,3}$ to $T_{R,3,3}$ by multiplying by $r_{R,L}(3,3).$ \end{example}

\begin{example} The next example will be used to motivate the definition of the function $z$  in Section \ref{sec:neededfunctions}.
\begin{equation}\label{equ:zmotivation} T_{L,3,2} =
y(3,2) \cdot x(3,3) \cdot  \frac{1}{y(2,2)}
\cdot  T_{L,2,2}.\end{equation}
In this example, we start at $T_{L,2,2},$ \textit{travel} horizontally backward to $T_{L,2,1}$ by multiplying by  $\frac{1}{y(2,2)},$ then \textit{travel} vertically downward one row to $T_{L,3,1}$ using $x(3,3),$ and then \textit{travel} horizontally on row three from $T_{L,3,1}$ to $T_{L,3,2}$ using $y(3,2).$ \end{example}

In the sequel, when giving such equations, we may simply say 
\textit{they are justified by general travel considerations.}

Figure \ref{fig:3rgrid} motivates the following  formal definition of  the \textit{initial non-one $c$ grid}
and related concepts.  

\begin{definition}\label{def:initialcgrid} Given $c \ge 1,$  an \textbf{initial non-one $c$ grid} is 
a triangular $c$-grid, satisfying Definition \ref{def:ngrid}, with the edge labels satisfying \eqref{equ:tl11}-\eqref{equ:edgeratiorelationships}.

A \textbf{non-one $c$ grid} (without the adjective initial)
is  a triangular $c$-grid, satisfying Definition \ref{def:ngrid}, with the edge labels satisfying
the four edge-ratio relationships.  

An \textbf{arbitrary $c$-grid } is  a triangular $c$-grid, satisfying Definition \ref{def:ngrid}.
\end{definition}

In the rest of the paper, an initial non-one $c$ grid is notationaly indicated by $T^{c};$  
$T^{c,i}, 1 \le i \le c$  indicates the non-one $i$ grid obtained by applying $c-i$ consecutive row-reductions to $T^{c}$ (note, $T^c = T^{c,c}.)$

 We can further use this example to illustrate all three of the Main Theorems. Figure \ref{fig:2reductions}  shows the results of applying the row-reduction algorithm, Definition \ref{def:rr}, or the four local  functions,  to the initial non-one 3-grid of Figure \ref{fig:3rgrid}. The computations are similar to those presented in Figure \ref{fig:12panels}.

\begin{figure}[ht!]
\begin{center}
 \caption{
 Two applications of row reduction  
to an initial non-one 3-grid (Figure \ref{fig:3rgrid}) resulting in a 
non-one 2-grid and 1 grid.}  \label{fig:2reductions}
\begin{tabular}{||c|c||} 
\hline
\begin{tikzpicture}
[xscale=1.75,yscale=1.75]

\draw (0,0)--(1,1)--(2,2) ;
\draw (2,0)--(3,1) ;

\draw (2,2)--(3,1)--(4,0);
\draw (1,1)--(2,0);

\draw (0,0)--(2,0)--(4,0);
\draw (1,1)--(3,1); 

\node  [above left] at (1.5,1.5) {$\frac{15}{14}$};
\node  [above left] at (.5,.5) {$\frac{15}{14}$};
\node  [above left] at (2.5,.5) {$\frac{45}{14}$};

\node  [above right] at (2.5,1.5) {$\frac{15}{14}$};
\node  [above right] at (3.5,.5) {$\frac{15}{14}$};
\node  [above right] at (1.5,.5) {$\frac{45}{14}$};

\node  [above] at (2,1) {$\frac{45}{14}$};
\node  [above] at (1,0) {$\frac{15}{14}$};
\node  [above] at (3,0) {$\frac{15}{14}$};

\node [below] at (2,-.5) {Panel A};


\end{tikzpicture}

&

\begin{tikzpicture}
[xscale=1.75,yscale=1.75]
 
\draw (0,0)--(1,1)--(2,0);
\draw (0,0)--(2,0);

\node  [above left] at (.5,.5) {$\frac{9}{7}$}; 
\node  [above right] at (1.4,.5) {$\frac{9}{7}$};
\node  [above left] at (1,0) {$\frac{9}{7}$};
\node [below] at (1,-.5) {Panel B};

\end{tikzpicture}

\\
\hline
\end{tabular}

\end{center}
\end{figure}

We observe the following, each of which will be generalized  and proven later in the paper. The first main result generalizes the following example. For any $X \in \{L,R,B\},$ we have
 \begin{align} \label{equ:tailratios}
T^{3,2}_{X,r,d} &= \frac{15}{14} T^{2}_{X,r,d}, &\qquad 1 \le r \le 2; 1 \le d \le r \nonumber \\
T^{3,1}_{X,r,d} &= \frac{9}{7} T^{1}_{X,r,d}, &\qquad 1 \le r \le 1; 1 \le d \le r. \end{align}
Equation \eqref{equ:tailratios} immediately implies the following result which the second main result generalizes:     The reduced grids $T^{3,i},i=1,2$ satisfy the four edge-ratio relationships.

By Definition \ref{def:tail} as illustrated
by Example \ref{exa:tail} we have 
\begin{equation}\label{equ:tails}
	 t(3,3)=\Delta(1,1,5) = \frac{1}{7}; \qquad
    	t(3,2)=\Delta(\frac{15}{14},\frac{15}{14},\frac{45}{14})=\frac{3}{14}; \qquad
	 t(3,1)=\Delta(\frac{9}{7},\frac{9}{7},\frac{9}{7})=\frac{3}{7}. 
\end{equation}
This implies
$$
   t(3,2)=\frac{1}{2} \cdot t(3,1) \text{ and }
    t(3,3)=\frac{1}{3} \cdot t(3,1), 
$$
relationships which mirror those found in Conjecture \ref{con:con2}. The third main result makes this resemblance precise and generalizes it.

\section{Needed Functions}\label{sec:neededfunctions}

Besides the four local functions, to formulate and prove the three main theorems we need three additional functions, $f, g, z,$ presented in this section.

For positive integer $c,$ define
\begin{equation}\label{equ:functionf}
		f(c,i) = 1+\frac{c-i}{i}\frac{1}{2c+1}, 1 \le i \le c-1, \qquad g(c) =  \frac{c}{c-1} \frac{2c-1}{2c+1}.
\end{equation}
\begin{example} $f(3,2)=\frac{15}{14}, f(3,1)=\frac{9}{7}$ corresponding to the ratios found in \eqref{equ:tailratios}. We will formally prove in one of the main theorems that $f(c,i)=\frac{T^{c,i}_{L,1,1}}{T^{i}_{L,1,1}}.$ To motivate  Proposition \ref{lem:gf}, one may verify that $\frac{15}{14}=g(3)=f(3,2),$  $\frac{9}{7}=f(3,1) = g(3) \cdot g(2)=
\frac{15}{14} \cdot \frac{6}{5}.$ \end{example}

\begin{proposition}\label{lem:gf} For integer $c \ge 1$ and        $1 \le j \le c-1,$ 
\begin{equation}\label{equ:temp1} f(c,j) = \displaystyle \prod_{i=j+1}^c g(i).\end{equation} \end{proposition}

\begin{proof} The proof is by induction on $j$ with arbitrary $c$ fixed. For the base case, $j=c-1,$ we verify that $f(c,c-1)=g(c).$
For an induction assumption we assume that for some $s, 1 \le s \le c-2,$ that 
\begin{equation}\label{temp2} 
	f(c,s) = \displaystyle \prod_{i=s+1}^c g(i).
\end{equation}
For an induction step, to complete the proof, we need to prove
\begin{equation}\label{temp3} 
	f(c,s+1) = \displaystyle \prod_{i=s+2}^c g(i).
\end{equation}
Dividing both sides of \eqref{temp2} and \eqref{temp3} we see that it suffices to verify
\begin{equation}
	\frac{f(c,s)}{f(c,s+1)}  = g(s+1).
\end{equation}
This verification is an assertion that two rational functions are equal which can be verified manually or by algebraic software. (Recall (Section \ref{sec:algorithm},) that to facilitate the flow of the narrative, proofs in this paper are accomplished by reducing assertions to the verification that  two rational functions are equal, or equivalently that their difference is 0 or their ratio is 1.)
Since $c$  was arbitrary, this completes the proof. 
\end{proof} 

 Define a function $z,$ by
\begin{equation}\label{equ:zoriginal}
		z(r,d) =\frac{\prod_{i=2}^d y(r+1,i)}{\prod_{i=2}^d y(r,i)},
  \text{ for } 1 \le r \le c-1, 2 \le d \le r.
\end{equation}
As illustrated in \eqref{equ:zmotivation} and the surrounding narrative, we have
\begin{equation}\label{equ:zlemma}
   T_{L,r+1,d} = x(c,r+1) z(r,d)
    T_{L,r,d}.
\end{equation}

However $z$ as defined in \eqref{equ:zoriginal} is not closed which would not allow
using algebraic verification as a proof method since this method requires closed forms. 
Fortunately, there is an alternate closed form for $z(r,d).$

\begin{proposition}\label{pro:z} With $z$ defined by \eqref{equ:zoriginal}, for $1 \le d \le r,$	
\begin{equation}\label{equ:zclosed}
	z(r,d)=1 -\frac{d-1}{r} \times
	        \frac{1}{2(r-d)+3} \doteq z_1(r,d),
\end{equation}
where we have defined $z_1(r,d)$ as indicated.
\end{proposition}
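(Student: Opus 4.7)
The plan is to prove \eqref{equ:zclosed} by induction on $d$, in keeping with the verification methodology of Section \ref{sec:proofoverview}. The base case $d=1$ is immediate: the products in \eqref{equ:z} are empty under the usual convention, so $z(r,1)=1$, which matches the right-hand side of \eqref{equ:zclosed} since its subtracted term carries the factor $d-1$ and vanishes at $d=1$.

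For the induction step, observe that the definition of $z$ in \eqref{equ:z} yields the multiplicative recursion
$$\frac{z(r,d+1)}{z(r,d)} \;=\; \frac{y(r+1,d+1)}{y(r,d+1)},$$
so it suffices to \emph{verify} that the proposed closed form satisfies this same recursion. Concretely, letting $h(r,d)$ denote the right-hand side of \eqref{equ:zclosed}, I would verify as a rational-function identity in the two variables $r,d$ that
$$\frac{h(r,d+1)}{h(r,d)} \;=\; \frac{y(r+1,d+1)}{y(r,d+1)}.$$
Under the definition of $y$ in \eqref{equ:edgefactorshorizontal}, the factor $(d-1)/(2d-3)$ in $y$ cancels between numerator and denominator on the right-hand side, so what remains is a one-line PARI verification entirely analogous to the one used in Proposition \ref{pro:fg}.

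A more transparent, non-inductive route, which I would mention as an alternative, is to telescope \eqref{equ:z} directly. Writing each factor $y(r+1,i)/y(r,i)$ out from \eqref{equ:edgefactorshorizontal}, the $(i-1)/(2i-3)$ part cancels, and the two surviving products
$$\prod_{i=2}^{d}\frac{2(r-i)+5}{2(r-i)+3}\qquad\text{and}\qquad\prod_{i=2}^{d}\frac{r-i+1}{r-i+2}$$
telescope to $\frac{2r+1}{2(r-d)+3}$ and $\frac{r-d+1}{r}$, respectively. Multiplying gives $z(r,d)=\frac{(2r+1)(r-d+1)}{r\bigl(2(r-d)+3\bigr)}$, and expanding the numerator $(2r+1)(r-d+1)=r\bigl(2(r-d)+3\bigr)-(d-1)$ shows this equals $h(r,d)$.

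The main obstacle is purely bookkeeping: one must keep the two index shifts consistent in the telescoping, and must confirm that no denominator vanishes in the allowed range $1\le d\le r$ (the quantity $2(r-d)+3$ is at least $3$ and $r\ge 1$, so this is automatic). There is no conceptual difficulty; the identity is essentially forced once the definition of $y$ is expanded, and the last step reduces to a rational-function check of the type that \emph{verification} is designed to dispatch.
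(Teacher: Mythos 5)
Your inductive argument is essentially the paper's own proof: the same recursion $z(r,d+1)/z(r,d)=y(r+1,d+1)/y(r,d+1)$ is extracted from \eqref{equ:z} and the closed form is checked to satisfy it by a rational-function \emph{verification}, exactly as in the paper (the only cosmetic difference is that you anchor the induction at $d=1$ via the empty-product convention, while the paper starts at $d=2$, consistent with the restriction $d\ge 2$ in \eqref{equ:z}; both choices are fine and agree with \eqref{equ:zclosed}). Your alternative telescoping derivation, however, is a genuinely different and arguably better route that the paper does not take: expanding each factor $y(r+1,i)/y(r,i)$ from \eqref{equ:edgefactorshorizontal}, cancelling the common $(i-1)/(2i-3)$, and telescoping the two remaining products to $\frac{2r+1}{2(r-d)+3}$ and $\frac{r-d+1}{r}$ gives $z(r,d)=\frac{(2r+1)(r-d+1)}{r\,(2(r-d)+3)}$ in closed form, and the identity $(2r+1)(r-d+1)=r\,(2(r-d)+3)-(d-1)$ finishes the proof with no induction and no software check. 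I verified the index bookkeeping and the nonvanishing of denominators ($2(r-d)+3\ge 3$ and $r\ge 1$ on the stated range), and both telescopes are correct. What the paper's induction buys is uniformity with the \emph{verification} template used throughout; what your telescoping buys is a shorter, fully self-contained, human-checkable proof that also explains \emph{why} the closed form has the shape it does.
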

\begin{proof}
The proof is by induction on $d$ with an arbitrary $r$ fixed. For a base case,   $d=2,$ by  \eqref{equ:zclosed}, \eqref{equ:zoriginal},
and \eqref{equ:edgefactorshorizontal}, we may verify $z(r,2) =\frac{y(r+1,2)}{y(r,2)}=z_1(r,2).$

For an induction assumption,
we assume the following  holds for some $s, 2 \le s \le r-1.$
\begin{equation}\label{equ:zTemp1}
	\frac{\prod_{i=2}^s y(r+1,i)}{\prod_{i=2}^s y(r,i)}.
 =z_1(r,s)  
\end{equation}

For the induction step, which will complete the proof, we must, given \eqref{equ:zTemp1}, prove
\begin{equation}\label{equ:zTemp2}
	\frac{\prod_{i=2}^{s+1} y(r+1,i)}{\prod_{i=2}^{s+1} y(r,i)}
 =z_1(r,s+1).
\end{equation}

Dividing the right and left-hand sides of \eqref{equ:zTemp1} and \eqref{equ:zTemp2} (and performing routine cancellations) we see that it suffices to verify
$\frac{y(r+1,s+1)}{y(r,s+1)}=z(r,s+1)/z(r,s).$
This completes the proof.
\end{proof}

In the sequel we will use $z$ to refer either to $z$ proper or to $z_1.$

\section{The Three Main Theorems}\label{sec:maintheorems}

  Section \ref{sec:motivatingexample} motivated with illustrative examples the main theorems of this paper. In stating the main theorems we use the language presented in Definition \ref{def:initialcgrid} and the notation indicated immediately after it.

To state the theorem about the symmetry of reduced initial $c$ grids we have to formally define
what we mean by vertical, rotational, and slide symmetry. We provide a graph theoretic definition, independent of an embedding of the graph  in the Cartesian plane.

\begin{definition}\label{def:isotropy} An arbitrary $c$ grid  is said to possess vertical symmetry if
$$
	T_{L,r,d} = T_{R,r+1-d,2} \qquad 
	T_{B,r,d} = T_{B,r+1-d,2}, \qquad
	1 \le d \le r \le c.
$$
An arbitrary $c$-grid is said to possess rotational symmetry (by $\frac{\pi}{3}$) if  for
$1 \le d \le r \le c,$
$$
    T_{L,r,d} = T_{R,n+d-r,n+1-r},\qquad
	T_{R,r,d} = T_{B,n+d-r,n+1-r} ,\qquad
	T_{B,r,d} = T_{L,n+d-r,n+1-r}.  
$$
An arbitrary $c$ grid is said to posses  \emph{slide} symmetry if
for   $1 \le d \le r \le c,$
$$
    T_{L,r,d} = T_{L,n+d-r,d},  \qquad 
    T_{R,r,d} = T_{B,n+d-r,d}  
$$
\end{definition}
These symmetries are amply illustrated in Figures \ref{fig:3rgrid} and \ref{fig:2reductions}.

\begin{remark} It is straightforward to verify that an assumption of vertical and slide symmetry is equivalent to an assumption of vertical and rotational symmetry.
Therefore, in the sequel the statement that a $c$-grid possesses symmetry will mean that it possess vertical, rotational, and slide symmetry.
\end{remark}
 
\begin{theorem}[Symmetry] \label{the:isotropy} For integer $c>0, 1 \le i \le c$  $T^{c,i}$
possesses vertical and slide symmetry. \end{theorem}.

\begin{theorem}[Edge ratios]\label{the:edgefactors}  
For integer $c>0,$ and an initial non-one $c$ grid $T^{c}.$  \\
i) For $1 \le i \le c,$ 
\begin{equation}\label{equ:trci}
    t(c,i) = \frac{1}{i} \cdot t(c,1)
\end{equation}
with
\begin{equation}\label{equ:trc1}
    t(c,1)= \frac{c}{2c+1}.
\end{equation}
ii) For $1 \le i \le c$ the four edge-factor relationships hold in $T^{c,i}.$ \\
iii) For $1 \le i \le c-1,$ 
\begin{equation}\label{equ:fcttrc}
    T^{c,i}_{X,r,d}  = f(c,i) \cdot T^{i}_{X,r,d}, \text{ for } 
	1 \le r \le i, 1 \le d \le r, X \in \{L,R,B\}.
\end{equation}
\end{theorem}

To formulate the third main theorem we need to introduce the concept of the upper-left half of a $c$ grid.

 \begin{definition}\label{def:upperlefthalf}  The \emph{upper left half} of an arbitrary $c$-grid, consists of the triangles
\begin{equation}\label{equ:upperlefthalf}
	T_{r,d}, \qquad  d=1,\dotsc, \lfloor \frac{c+2}{3} \rfloor,  r =2d-1, \dotsc, \lfloor \frac{c+d}{2} \rfloor.
\end{equation} 
\end{definition}

Figure \ref{fig:upperlefthalf} illustrates the definition.

\begin{figure}[ht!]
\begin{center}
\caption{ 
The \textit{upper left half} of this 4-grid consists of the triangles $T_{1,1}, T_{2,1},T_{3,2}.$   
}\label{fig:upperlefthalf}
\begin{tabular} {|c|}
\hline

\begin{tikzpicture} [xscale=1,yscale =1]

\node [below] at (3,3.2) {    };
 
\draw (-1,-1)--(0,0)--(1,1)--(2,2)--(3,3);
\draw (1,-1)--(2,0)--(3,1)--(4,2);
\draw (3,-1)--(4,0)--(5,1);
\draw (5,-1)--(6,0);

\draw (3,3)--(4,2)--(5,1)--(6,0)--(7,-1);
\draw (2,2)--(3,1)--(4,0)--(5,-1);
\draw (1,1)--(2,0)--(3,-1);

\draw (-1,-1)--(1,-1)--(3,-1)--(5,-1)--(7,-1);
\draw (0,0)--(2,0)--(4,0)--(6,0);
\draw (1,1)--(3,1)--(5,1);
\draw (2,2) -- (4,2);

\begin{scriptsize}
\node [above] at (0,-.8)
{$T_{4,1}$};
\node [above] at (2,-.8)
{$T_{4,2}$};
\node [above] at (4,-.8)
{$T_{4,3}$};
\node [above] at (6,-.8)
{$T_{4,4}$};

\node  [above] at (1,.2) {$T_{3,1}$};
\node  [above] at (3,.2) {$T_{3,2}$};
\node  [above] at (5,.2) {$T_{3,3}$};
\node  [above] at (2,1.2) {$T_{2,1}$};
\node  [above] at (4,1.2) {$T_{2,2}$};
\node  [above] at (3,2.3) {$T_{1,1}$};

\node [below] at (6,0) {$ $};
\node [below] at (4,0) {$ $};
\node [below] at (2,0) {$ $};
\node [below] at (0,0) {$ $};

\end{scriptsize}

\end{tikzpicture}
\\ \hline
\end{tabular}
\end{center}
\end{figure}

\begin{remark} In the sequel it will be convenient to be able to refer to the i) upper right half, ii) lower left half and iii) lower right half of an arbitrary $c$-grid. These respectively correspond to the triangles $T_{r,d}$ with $(r,d)$ in 
i) $d=\lfloor \frac{c+2}{3} \rfloor,\dotsc,r,$  $r =2d-1, \dotsc, \lfloor \frac{c+d}{2} \rfloor,$
ii) $d=1,\dotsc, \lfloor \frac{c+2}{3} \rfloor,$  $r =\lfloor \frac{c+d}{2} \rfloor, \dotsc, c,$
iii) $d=\lfloor \frac{c+2}{3} \rfloor,\dotsc,r,$  $r =\lfloor \frac{c+d}{2} \rfloor, \dotsc, c.$
Notice that these regions are not mutually disjoint. We may further use the terms i) top corner, ii) 4 O'Clock corner, and iii) 8 O'Clock corner of the $c$-grid  to respectivelly refer to i) the union of the upper left and right half, ii) the lower right half, and iii) the lower left half of an arbitrary  $c$ grid. 
\end{remark}

\begin{proposition}\label{lem:upperlefthalf2} The  edge labels in the upper left half of an arbitrary $c$-grid possessing  symmetry 
determine all remaining edge-labels in the $c$-grid. \end{proposition}
\begin{proof} Vertical symmetry maps the \textit{upper left half} to \textit{the upper right half} showing that the edges in the \textit{top corner} of the $c$-grid  are determined. Two applications of rotational symmetry then map this top corner to the  4 O'Clock and 8 O'Clock corners showing that the edge labels of the entire triangular grid are determined from the edge labels in the upper left half. 
\end{proof}

We provide one more definition.

\begin{definition} Let $T$ be a $c$-grid. 
For $1 \le t \le \lfloor \frac{c+2}{3} \rfloor,$ the $t$-rim of a $c$-grid
is a union of the following 3 sets of triangles:
$T_{r,1},2t-1 \le r \le c+1-t,$
$T_{c+1-t,d}, t \le d \le c-2(t-1)  $
$T_{r,r-(t-1)},2t-1 \le r \le c+1-2t $.
\end{definition}

\begin{example}If $c=4,$ as in Figure \ref{fig:upperlefthalf},
then the 2-rim consists of the singleton triangle $T_{3,2}$
while the 1-rim consists of the 9 triangles
$\{T_{r,1}\}_{1 \le r \le 4} \cup \{T_{4,d}\}_{1 \le d \le 4} \cup \{T_{r,r}\}_{1 \le r \le 4}.$  Heuristically, the $c$ grid may be perceived as a collection of concentric rims.  
\end{example} 

We may now state the third main theorem.

\begin{theorem}[Edge-Ratio Symmetry duality] \label{the:symmetryedgefactors} For  an arbitrary $c$ grid possessing symmetry, if the four edge-ratio relationships are satisfied for edges of triangles  in the upper left half, then the four edge-factor relationships are satisfied in the entire $c$-grid. 
\end{theorem}

The name given to the theorem reflects the heuristic of its insights. The theorem coupled with 
 Proposition \ref{lem:upperlefthalf2} intuitively says that the local edge-ratio relationships mirror global symmetry in the sense that (with the conditions as indicated in the theorem) the edge ratio relationships imply symmetry and symmetry in turn implies the edge-factor relationships.

Section \ref{sec:motivatingexample} presented the non-one initial $3$-grid. Similar methods can be used to calculate the initial non-one $c$-grids, $c=1,2,4.$ The theorems of this paper can be manually checked for these small values.

\begin{proposition}\label{lem:cgreaterthan4} To prove Theorem \ref{the:edgefactors}(ii) it suffices to prove the required
equations for any $c$ grid with $c \ge 5.$ \end{proposition}

An advantage of restricting ourselves to $c \ge 5$ is the following proposition.

\begin{proposition}\label{lem:4localfunctions}
For $c \ge 5$ the restrictions on using the four local functions are satisfied in the upper left half of the initial $c$-grid.
\end{proposition}
\begin{proof}
By \eqref{equ:perimeteredge}-\eqref{equ:leftedge}, to use the four local functions,   requires, $$
r \le c-2, \qquad d \le r-1.
$$
By Definition \ref{def:upperlefthalf}, $(r,d)$ lies   in the upper left half if
\begin{equation*} 
	T_{r,d}, \qquad  d=1,\dotsc, \lfloor \frac{c+2}{3} \rfloor,  r =2d-1, \dotsc, \lfloor \frac{c+d}{2} \rfloor.
\end{equation*} 
It is straightforward to check that the solution to these simultaneous inequalities is
$c \ge 5.$
\end{proof}

\section{Proof of the Symmetry Theorem}\label{sec:isotropy}

Theorem \ref{the:isotropy} states that both $T^{c}$ and its reductions $T^{c,i}, 1 \le i \le c$ possess  symmetry. However, in terms of proofs, we first prove the theorem for the case $i=c.$ We then, 
(Proposition \ref{lem:inheritance}),  prove that the reduced grids inherit, one reduction at a time, the symmetries of the parent grid, completing the proof of the theorem. 

By Definition \ref{def:isotropy}, to prove  Theorem \ref{the:isotropy} for the case $i=c$ we must prove
vertical  and slide symmetry for  $T^{c}.$

\textbf{Proof of vertical symmetry.} We prove 
\begin{equation}\label{VTemp1}
	T_{L,r, \frac{r}{2}-s} = T_{R,r,\frac{r}{2}+s+1},
 \text{ for } 0 \le s \le \frac{r}{2}-1,
 \end{equation}
the corresponding proof for the base edges being similar and hence omitted. The proof, depends on the parity of $r.$ We assume $r$ even, the proof for
the $r$ odd case being similar and hence omitted.

For the base case we prove \eqref{VTemp1} for the case $s=0.$ Travel from
$T_{L,r, \frac{r}{2}-s}$ to $T_{L,r,\frac{r}{2}+s+1}$ is accomplished by $y$ while
travel from $T_{L,r,\frac{r}{2}+s+1},$ to $T_{R,r,\frac{r}{2}+s+1},$ is accomplished by $r_{R,L}.$ Hence the proof for the case $s=0$ is completed by verifying 
\begin{equation}\label{ITemp1}
		y(r,\frac{r}{2}+1) \cdot r_{R,L}(r,\frac{r}{2}+1) =1.
\end{equation}

For an induction assumption we assume \eqref{VTemp1} true for some $s \ge 0.$ Then, to complete the proof, we must prove an induction step, that  given \eqref{VTemp1} we have
\begin{equation}\label{VTemp2}
		T_{L,r, \frac{r}{2}-s-1} = T_{R,r,\frac{r}{2}+s+2}.
\end{equation}
The travel from 
i) $T_{L,r, \frac{r}{2}-s-1}$ to $T_{L,r, \frac{r}{2}-s}$,  then from 
ii) $T_{L,r, \frac{r}{2}-s}$, to  $T_{R,r,\frac{r}{2}+s+1}$, then iii) from $T_{R,r,\frac{r}{2}+s+1}$ to $T_{L,r,\frac{r}{2}+s+1},$ 
then iv) from $T_{L,r,\frac{r}{2}+s+1},$ to
$T_{L,r,\frac{r}{2}+s+2},$ and finally 
(v) from $T_{L,r,\frac{r}{2}+s+2},$ to $T_{R,r,\frac{r}{2}+s+2},$
 is respectively 
accomplished by 
i) $\frac{1}{y(r,\frac{r}{2}-S)}$
ii) $y(r,\frac{r}{2}+1) \cdot r_{R,L}(r,\frac{r}{2}+1)$ 
(which by \eqref{ITemp1} equals 1),
iii) $\frac{1}{r_{R,L}(r,\frac{r}{2}+s+1)},$
iv) $y(r, \frac{r}{2}+s+2),$ and
v) $r_{R,L}(r,\frac{r}{2}+s+2).$
Consequently the proof of \eqref{VTemp2} given \eqref{ITemp1} is completed by verification that 
$$
	\frac{1}{y(r, r/2 - s)} =     
  	\frac{1}{r_{R,L}(r, r/2 + s + 1)}
	 y(r, r/2 + s + 2) 
	r_{R,L}(r, r/2 + s + 2).
$$

\textbf{Proof of Slide Symmetry}. 
By Definition \ref{def:isotropy} we must prove slide symmetry for all three sides. We suffice with dealing just with the left-hand side, that is, with proving,
\begin{equation}\label{ITemp4}
	 T_{L,\frac{c+d-1}{2}-i,d} = T_{L,\frac{c+d-1}{2}+1+i,d}, \qquad
	\text{ for } 1 \le d \le c, 0 \le i \le \frac{c-d-1}{2},
\end{equation}
the proof for the other sides being similar and hence omitted. In the proof, we assume $c+d$ odd, the proof for the even case being similar and hence omitted.  
 	 
For the base case we prove \eqref{ITemp4} for $i=0.$ By \eqref{equ:zlemma}  
 it suffices to verify
$$
	x(c,\frac{c+d-1}{2}+1) \cdot z(\frac{c+d-1}{2},d)=1.
$$

\textbf{Induction Assumption.} For an induction assumption, we assume that for some $s, 0 \le s \le \frac{c-d-1}{2}-1$ 
that 
\begin{equation}\label{equ:ITemp5} 
	  T_{L,\frac{c+d-1}{2}-s,d} = T_{L,\frac{c+d-1}{2}+1+s,d}.
\end{equation} 

 To complete the proof, we must, using an  induction step,  prove
\begin{equation}\label{ITemp6} 
	  T_{L,\frac{c+d-1}{2}-s-1,d} = T_{L,\frac{c+d-1}{2}+2+s,d}.
\end{equation} 
Travel  i) from $T_{L,\frac{c+d-1}{2}-s-1,d}$ to  $T_{L,\frac{c+d-1}{2}-s,d},$ 
then ii) to $T_{L,\frac{c+d-1}{2}+1+s,d},$
then iii) to $T_{L,\frac{c+d-1}{2}+2+s,d},$ 
is, by \eqref{equ:zlemma}, respectively accomplished by 
i) $\biggl(x(c,\frac{c+d-1}{2}-s) \cdot 
z(\frac{c+d-1}{2}-s-1,d) \biggr),
$
ii) 1 (by the induction assumption, \eqref{equ:ITemp5}), and
iii) $\biggl( x(c,\frac{c+d-1}{2}+s+2) \cdot z(\frac{c+d-1}{2}+(s+1),d)\biggr).$
Hence, the proof is completed by verifying 
$$
\biggl(x(c,\frac{c+d-1}{2}-s) \cdot 
z(\frac{c+d-1}{2}-s-1,d) \biggr) \cdot
\biggl( x(c,\frac{c+d-1}{2}+s+2) \cdot z(\frac{c+d-1}{2}+(s+1),d)\biggr)  
=1.
$$

This competes the  proof of slide symmetry and  the symmetry theorem for the case $i=c.$ To prove the symmetry theorem for all $i$ we need the following  Proposition.

\begin{proposition}\label{lem:inheritance} If an arbitrary $c$ grid possesses  symmetry then the
$c-1$ grid arising from one reduction of it also possesses symmetry.
\end{proposition}
\begin{proof} In the obvious manner we may speak about a triangular grid of $Y$s possessing vertical, rotational,  and slide symmetry. Since the $Y$s arising from a $\Delta$ - Y transformation are determined by
the sides of the underlying triangle, the $Y$s will have the same symmetries as the original
configuration of triangles. Walking through the definition of reduction, Definition \ref{def:rr}, we see that discarding tails, performing series transformation on adjacent edges, and performing Y-$\Delta$ transformations preserve vertical and slide symmetry. Hence the once-reduced grid will possess the symmetries of the parent grid.
\end{proof}

\begin{corollary} 
The symmetry theorem is true for all $i, 1 \le i \le c.$
\end{corollary}

\section{Proof of the Edge-Factor Symmetry Duality Theorem }
\label{sec:inheritance}

   The proof 
of Theorem \ref{the:symmetryedgefactors}
consists of a series of  propositions and corollaries, each  proposition-corollary pair focusing on one edge-factor relationship.
Throughout this section statements of satisfaction of the edge-ratio relationships or of edges belonging to the upper left half have the formal meaning given in
\eqref{equ:edgeratiorelationships} and
Definition 
\ref{def:upperlefthalf} respectively.

\begin{proposition}\label{lem:horizontalduality} For an arbitrary $c$ grid possessing  symmetry and for an arbitrary fixed row, $r, 1 \le r \le c:$ \\
a)   If the \textbf{right-left} edge ratio relationship $\frac{T_{r,d,R}}{T_{r,d,L}}= r_{R,L}(r,d)$ holds for $1 \le d \le \lfloor \frac{r+1}{2}  \rfloor,$ then it holds for $1 \le d \le r.$\\
b)  If the \textbf{base-left} edge ratio relationship $\frac{T_{r,d,B}}{T_{r,d,L}}= r_{B,L}(r,d)$ holds for $1 \le d \le \lfloor \frac{r+1}{2} \rfloor$, then it  holds for  $1 \le d \le r.$\\
c)  If the \textbf{horizontal} edge ratio relationship $\frac{T_{r,d,L}}{T_{r,d,L}}= y(r,d)$ holds for $2 \le d \le \lfloor \frac{r+1}{2} \rfloor,$ then it  holds for $1 \le d \le r.$\\
\end{proposition}
\begin{proof}
a) Vertical symmetry implies
that the numerators and denominators of the following two fractions are equal (and hence their ratios are equal).
$$
	\frac{T_{r,d,R}}{T_{r,d,L}} = \frac{T_{r,r+1-d,L}}{T_{r,r+1-d,R}}.
$$
By the hypothesis of the  Proposition that the edge-ratio relationships hold in the upper left half, for
$1 \le d \le \lfloor \frac{r+1}{2} \rfloor,$
$$
	r_{R,L}(r,d) =\frac{T_{r,d,R}}{T_{r,d,L}}.
$$
We can algebraically verify that for all $d, 1 \le d \le r,$
$$
	r_{R,L}(r,d) = \frac{1}{r_{R,L}(r,r+1-d)}.
$$
Combining these equations we infer
$$
 \frac{T_{r,r+1-d,R}}{T_{r,r+1-d,L}} = r_{R,L}(r,r+1-d)
$$
as required.\\

b) Vertical symmetry implies
$$
	\frac{T_{r,d,B}}{T_{r,d,L}}=\frac{T_{r,r+1-d,B}}{T_{r,r+1-d,R}}=
	\frac{T_{r,r+1-d,B}}{T_{r,r+1-d,L}}\frac{T_{r,r+1-d,L}}{T_{r,r+1-d,R}}.
$$
By the hypothesis of the  Proposition,
for
$1 \le d \le \lfloor \frac{r+1}{2} \rfloor,$
$$
		\frac{T_{r,d,B}}{T_{r,d,L}} = r_{B,L}(c,r,d).	
$$
By part (a)
$$
	\frac{T_{r,r+1-d,L}}{T_{r,r+1-d,R}}= \frac{1}{r_{R,L}(r,r+1-d)}.
$$

We can algebraically verify
$$
 	r_{B,L}(c,r,d) =r_{B,L}(c,r,r+1-d) \frac{1}{r_{R,L}(r,r+1-d)}.
$$
Combining these equations we infer, as required, that
$$
	\frac{T_{r,r+1-d,B}}{T_{r,r+1-d,L}}=r_{B,L}(c,r,r+1-d).	
$$\\

(c) Vertical symmetry implies 
$$
		\frac{T_{r,d,L}}{T_{r,d-1,L}} = \frac{T_{r,r+1-d,R}}{T_{r,r+2-d,R}} 
	= \frac{T_{r,r+1-d,L}}{T_{r,r+2-d,L}}\frac{T_{r,r+1-d,R}}{T_{r,r+1-d,L}}
		\frac{T_{r,r+2-d,L}}{T_{r,r+2-d,R}}.
$$

By the hypothesis of the  Proposition and for $d$ in the range indicated
$$
	y(r,d)=\frac{T_{r,d,L}}{T_{r,d-1,L}}.
$$
By Part (a)
$$
	\frac{T_{r,r+1-d,R}}{T_{r,r+1-d,L}}
		\frac{T_{r,r+2-d,L}}{T_{r,r+2-d,R}} =\frac{r_{R,L}(r,r+1-d)} {r_{R,L}(r,r+2-d)}.
$$
We can algebraically verify
$$
	y(r,d)= \frac{1}{y(r,r+2-d)} \frac{r_{R,L}(r,r+1-d)}{r_{R,L}(r,r+2-d)}.
$$
Combining these last 3 equations we infer,as required, that
$$
	\frac{T_{r,r+2-d,L}}{T_{r,r+1-d,L}} = y(r,r+2-d).
$$  \end{proof}

\begin{corollary}
For an arbitrary $c$ grid possessing  symmetry, if the right-left, base-left, and horizontal edge ratios hold in the upper left half then they hold in the entire upper half.
\end{corollary}

\begin{proposition}\label{lem:upperlefthalflefthalf}
For an arbitrary $c$ grid possessing symmetry:\\
a)   If the vertical edge ratio relationship holds in the upper half, then
it holds in the entire $c$ grid.\\
 b)  If the right-left and base-left edge ratio  relationships hold in the upper half, then
they hold in the entire $c$ grid. \\
\end{proposition}
\begin{proof}  
a) Slide symmetry applied to diagonal $d=1,$ implies
$\frac{T_{r,1,L}}{T_{r-1,1,L}} = \frac{T_{c+1-r,1,L}}{T_{c+2-r,1,L}}.$
But $x(c,r) = \frac{T_{r,1,L}}{T_{r-1,1,L}}.$ We may algebraically verify
$x(c,r)=\frac{1}{x(c,c+2-r))}, 2 \le r \le c.$ Combining these equations
we infer $\frac{T_{c+2-r,1,L}}{T_{c+1-r ,1,L}}=x(c,c+2-r)$ as required.\\
b)For any diagonal $d,$ slide symmetry requires that for $d \le r \le c,$
\begin{equation}\label{82bc1}\frac{T_{r,d,R}}{T_{r,d,L}} = \frac{T_{c+d-r,d,B}}{T_{c+d-r,d,L}}.
\end{equation}
We may algebraically verify \begin{equation}\label{82bc2}r_{R,L}(r,d) = r_{B,L}(c,c+d-r,d).\end{equation} We can now  combine these two equations in two ways. \\
(i) If $r$ is in the upper half, then by hypothesis,
$r_{R,L}(r,d) =\frac{T_{r,d,R}}{T_{r,d,L}}.$  Combining this equation with \eqref{82bc1}-\eqref{82bc2} we infer $r_{B,L}(c,c+d-r,d)=
\frac{T_{c+d-r,d,B}}{T_{c+d-r,d,L}},$  with $c+d-r$ in the lower half as required.\\
(ii) On the other hand, if $r$ is in the lower half, then $c+d-r$ is in the upper half and by hypothesis $r_{B,L}(c,c+d-r,d) =\frac{T_{c+d-r,d,B}}{T_{c+d-r,d,L}}.$ Combining the two equations we infer
$r_{R,L}(r,d) = 
\frac{T_{r,d,R}}{T_{r,d,L}}$ with $r$ in the lower half as required.
\end{proof} 

\begin{corollary}
For an arbitrary $c$-grid possessing symmetry, if the four edge-ratio relationships hold in the upper left half then i) the vertical, right-left, and base-edge ratio relationships hold in the entire $c$-grid, and ii) the horizontal edge ratio relationships holds in the upper half.
\end{corollary}

Thus, to complete the proof of Theorem \ref{the:symmetryedgefactors} we must show that the horizontal edge-ratio relationship holds in the lower  half. The next  Proposition proves this inductively, the various  Proposition components corresponding to the parts of the inductive proof and themselves requiring their own inductive proofs.

\begin{proposition}\label{lem:horizontaledgefactor}
For an arbitrary $c$ grid possessing symmetry, if the edge ratio relationships hold in the upper left half, then: \\
a) The horizontal edge ratio relationship, $y(c,r)$ holds for the bottom row, $r=c.$ \\
b) $\mathbf{Induction\:Base\: Case:}$ For $d=2,$ i) the horizontal edge ratio relationships holds (that is, $y(r,2) = 
\frac{T_{r,2,L}}{T_{r,1,L}}, 2 \le r \le c$),  and ii) 
$x(c,r) \cdot z(r-1,2) = \frac{T_{c,r,2}}{T_{c,r-1,2}}$ for $3 \le r \le c.$\\
c) $\mathbf{The\: Induction\: Step.}$ For each diagonal $d>2,$ i) the horizontal edge ratio relationship holds for diagonal $d$ and ii) $x(c,r) \cdot z(r-1,d) = \frac{T_{c,r,d}}{T_{c,r-1,d}},$ for $d+1 \le r \le c.$ \\
d) The horizontal edge ratio holds for the entire $c$-grid.
\\
\end{proposition}
\begin{proof}   Clearly, d) follows from a), b), and c). Thus we need only prove a)-c).  Proposition 
\ref{lem:horizontalduality}(c) (which was stated for arbitrary $r$) asserts that to prove the horizontal edge ratio is satisfied in the entire $c$-grid it suffices to show it is satisfied in the left half. Accordingly throughout the proof we assume 
$1 \le r \le \lfloor \frac{c+2}{2} \rfloor.$  Note that the set of $r$ satisfying $1 \le r \le \lfloor \frac{c+2}{2} \rfloor$ is non-empty as, by
 Proposition \ref{lem:cgreaterthan4}, it contains 3.

\textbf{Proof of Part (a).}
By rotational symmetry,  
for $2 \le r \le c,$
$\frac{T_{r,1,L}}{ T_{r-1,1,L}} = 
\frac{T_{c,r,B}}{ T_{c,r-1,B}} =
\frac{T_{c,r,B}}{ T_{c,r,L}}   
\frac{T_{c,r,L}} {T_{c,r-1,L}}
\frac{T_{c,r-1,L}} {T_{c,r-1,B}}.$
Previous results in this section have established that
$x(c,r)= \frac{T_{r,1,L}}{ T_{r-1,1,L}}$
and $\frac{T_{c,r,B}}{ T_{c,r,L}}   
\frac{T_{c,r-1,L}} {T_{c,r-1,B}}=\frac{r_{B,L}(c,c,r)}{ r_{B,L}(c,c,r-1)}.$
Moreover, we may verify $x(c,r) = r_{B,L}(c,c,r) \frac{y(c,r)}{r_{B,L}(c,c,r-1)}.$
Combining these equations we infer $y(c,r)=\frac{T_{r,1,L}}{ T_{r-1,1,L}}$ as was to be proved.\\

\textbf{Proof of Part (b).} For the proof we first need to verify the following.
 \begin{equation}\label{104E}
x(c,r) \cdot z(r-1,2) = 
\frac{1}{x(c,c+3-r)}
\frac{1}{z(c+2-r,2)}=
\frac{1}{x(c,c+3-r)} 
\frac{y(c+2-r,2)}{y(c+3-r,2)}.
\end{equation}

By slide symmetry
\begin{equation}\label{104A}
\frac{T_{r,2,L}}{T_{r-1,2,L}}
=
\frac{T_{c+2-r,2,L}}{T_{c+3-r,2,L}}.
\end{equation}

By results previously established in this section, for the left-hand side of \eqref{104A}
 \begin{equation}\label{104B}
\frac{T_{r,2,L}}{T_{r-1,2,L}}
=
\frac{T_{r,1,L}}{T_{r-1,1,L}}
\frac{T_{r,2,L}}{T_{r,1,L}}
\frac{T_{r-1,1,L}}{T_{r-1,2,L}}
=
x(c,r) \frac{y(r,2)}{y(r-1,2)}
= x(c,r) \cdot z(r-1,2),
\end{equation}
the last equality following from   \eqref{equ:zoriginal}-\eqref{equ:zlemma}.

By results previously established in this section,
for the right-hand side of \eqref{104A},
\begin{equation}\label{104C}
\frac{T_{c+2-r,2,L}}{T_{c+3-r,2,L}} =
\frac{T_{c+2-r,1,L}}{T_{c+3-r,1,L}}
\frac{T_{c+3-r,1,L}}{T_{c+3-r,2,L}}
\frac{T_{c+2-r,2,L}}{T_{c+2-r, 1,L}}=
\frac{1}{x(c,c+3-r)}
\frac{T_{c+3-r,1,L}}{T_{c+3-r,2,L}}
\frac{T_{c+2-r,2,L}}{T_{c+2-r, 1,L}}.
\end{equation}.

 Combining \eqref{104B}-\eqref{104C} with \eqref{104A} yields
 \begin{equation}\label{104F}
x(c,r) \cdot z(r-1,2) = \frac{1}{x(c,c+3-r)} \frac{T_{c+3-r, 1,L}}{T_{c+3-r,2,L}}
\frac{T_{c+2-r,2,L}}{T_{c+2-r, 1,L}}.
 \end{equation}

We now proceed inductively. For $r=3,$ by  Proposition \ref{lem:horizontaledgefactor}(a),
$\frac{T_{c+3-r, 1,L}}{T_{c+3-r,2,L}} = y(c,2).$ Combining this equation for $y(c,2)$  with \eqref{104F} and \eqref{104E}, we conclude that  $\frac{T_{c-1, 2,L}}{T_{c-1,1,L}}=
y(c-1,2).$ Letting $r=4,$ we combine this equation for $y(c-1,2)$ with \eqref{104F}, and \eqref{104E} and conclude 
$\frac{T_{c-2, 2,L}}{T_{c-2,1,L}}=
y(c-2,2).$ Letting $r=5$ we combine this equation for $y(c-2,2)$ with  \eqref{104F}, and \eqref{104E} and conclude 
$\frac{T_{c-2, 2,L}}{T_{c-2,1,L}}=
y(c-3,2).$ Thus, proceeding inductively, we prove  Proposition \ref{lem:horizontaledgefactor}(b). (Note: For every $r$ in the upper half, $c-r$ is in the lower half. So we only need to use this inductive argument while $r$ is in the upper half.) \\

\textbf{Proof of Part (c).} Let $d > 2.$ We may inductively assume that i) the horizontal edge ratio relationship holds for diagonal $d-1,$ and ii) 
$x(c,r) \cdot z(r-1,d-1) = \frac{T_{c,r,d-1}}{T_{c,r-1,d-1}},$  
the base case of this induction assumption proven in Part b) for $d=2.$ 

For the proof we must first verify
 
\begin{equation}\label{504A}  
x(c, r) z(r - 1, d - 1)
\frac{y(r, d)}{y(r - 1, d)} = 
 1/(x(c, c + d + 1 - r) z(c + d - r, d - 1)) 
 \frac{y(c + d - r, d)}
   {y(c + d + 1 - r, d)}. 
\end{equation}
\normalsize
By slide symmetry,
\begin{equation}\label{504B}
\frac{T_{r,d,L}}{T_{r-1,d,L}} = \frac{T_{c+d-r,d,L}}{T_{c+d+1-r,d,L}}.
\end{equation} 
Using results previously proven in this section, the left-hand side of \eqref{504B} satisfies
\begin{equation}\label{504C}
\frac{T_{r,d,L}}{T_{r-1,d,L}} = \frac{T_{r,d-1,L}}{T_{r-1,d-1,L}} 
\frac{T_{r,d,L}}{T_{r,d-1,L}} \frac{T_{r-1,d-1,L}}{T_{r-1,d,L}} =x(c,r) z(r-1,d-1) y(r,d) \frac{1}{y(r-1,d)}.
\end{equation}
 
Using the induction assumption, the right-hand side of \eqref{504B} satisfies
\begin{multline}\label{504D}
\frac{T_{c+d-r,d,L}}{T_{c+d+1-r,d,L}} =
\frac{T_{c+d-r,d-1,L}}{T_{c+d+1-r,d-1,L}} 
\frac{T_{c+d-r,d,L}}{T_{c+d-r,d-1,L}} 
\frac{T_{c+d+1-r,d-1,L}}{T_{c+d+1-r,d,L}}
= \\ \frac{1}{x(c,c+d-r) z(c+d-1-r,d-1))} \frac{T_{c+d-r,d,L}}{T_{c+d-r,d-1,L}} 
\frac{T_{c+d+1-r,d-1,L}}{T_{c+d+1-r,d,L}}
\end{multline}
Combining \eqref{504C}-\eqref{504D} with  \eqref{504B} we obtain,
\begin{equation}\label{504E}
x(c,r) z(r-1,d-1) y(r,d) \frac{1}{y(r-1,d)} =\frac{1}{x(c,c+d-r) z(c+d-1-r,d-1))} \frac{T_{c+d-r,d,L}}{T_{c+d-r,d-1,L}} 
\frac{T_{c+d+1-r,d-1,L}}{T_{c+d+1-r,d,L}}.
\end{equation}
We now proceed inductively. If $r=d+1,$ by  Proposition  \ref{lem:horizontaledgefactor}(b),
$\frac{T_{c+d+1-r,d-1,L}}{T_{c+d+1-r,d,L}} = \frac{1}{y(c,d)}$. Combining this equation with 
\eqref{504E} and \eqref{504A} we infer
$\frac{T_{c+d-r,d,L}}{T_{c+d-r,d-1,L}} =y(c-1,d).$ If $r=d+2,$ then combining this equation for $y(c-1,d)$ with \eqref{504E} and \eqref{504A} implies $\frac{T_{c+d-r,d,L}}{T_{c+d-r,d-1,L}} =y(c-2,d).$ If $r=d+3,$ then combining this equation for $y(c-2,d)$ with \eqref{504E} and \eqref{504A} implies $\frac{T_{c+d-r,d,L}}{T_{c+d-r,d-1,L}} =y(c-3,d).$ Proceeding inductively, we prove that the horizontal edge ratio holds on diagonal $d,$ completing the inductive proof of   Proposition  \ref{lem:horizontaledgefactor}(c), implying completion of  the proof
of  Proposition  \ref{lem:horizontaledgefactor}, so that 
the proof of Theorem \ref{the:symmetryedgefactors} is complete.
 \end{proof}

\section{Proof of Theorem \ref{the:edgefactors}(ii), Case $i=c-1.$}\label{sec:sampleproof}

 This section proves 
Theorem \ref{the:edgefactors}(ii), for the case $i=c-1,$ the proof for $1 \le i \le c-2,$ as well as proof of the other assertions of Theorem \ref{the:edgefactors} being completed in the next section. Throughout this section,  we assume that $c \ge 5.$ and that  $T^{c,c-1}$ possesses  symmetry, justified by  Proposition \ref{lem:cgreaterthan4},  and Theorem \ref{the:isotropy} respectively.

Theorem \ref{the:edgefactors}(ii) requires proving  that all four edge ratio relationships are satisfied in $T^{c,c-1}.$   We however suffice with the proof for the base-edge ratio relationship, the proof of the others being similar and hence omitted. By  Proposition \ref{lem:upperlefthalf2}, to prove that $T^{c,c-1}$ satisfies the base-edge ratio relationship, it suffices to prove that the base edge ratio relationship holds in its upper left half. To prove satisfaction in the upper left half,  we prove that the base edge ratio relationship holds in all rows $r, 1 \le r \le c-1$ but only
in diagonals $d, 2 \le d \le  \lfloor \frac{c+2}{3} \rfloor,$ the proof for the $d=1$ case being highly similar and hence omitted.  In summary, in this section, we prove
\begin{equation}\label{equ:sampleproof}
		\frac{    T_{B,r,d}^{(c,c-1)}     } 
        {   T_{L,r,d}^{(c,c-1)}      }= 
        r_{B,L}(c-1,r,d), 
        \qquad 3 \le r \le c-2, 2 \le d \le r.
\end{equation}

Figure \ref{fig:sampleproofbase}  based on   \eqref{equ:baseedge} and the four edge ratio relationships, 
present the 9 edge-value arguments needed to compute 
$T^{c,c-1}_{B,r,d}.$ Similarly,
Figure  \ref{fig:sampleproofleft} based on 
\eqref{equ:leftedge} and 
the four edge ratio relationships, 
present the 9 edge-value arguments needed to compute
$T^{c,c-1}_{L,r,d}.$
 
The ``work" is done in the Figures. The proof of \eqref{equ:sampleproof}  is then simply completed by  plugging these arguments into the base and edge local functions and verifying \eqref{equ:sampleproof} is true.

  
\begin{figure}[!ht]
\begin{center}
\caption{The left-hand panel presents the edges of $T^{c}$ that are arguments of \eqref{equ:baseedge} used to compute   $T_{B,r,d}^{(c,c-1)}.$
The right-hand panel contains the values of these arguments.  
}\label{fig:sampleproofbase}
\begin{tabular} {|c|c|}
\hline
\begin{tikzpicture}[xscale=1,yscale=1]



\draw    (1,0)--(2,2)--(3,0)--(1,0);
\draw    (0,2)--(1,4)--(2,2)--(0,2);
\draw     (2,2)--(3,4)--(4,2)--(2,2);
\draw    (1,4)--(2,6)--(3,4)--(1,4);

\node [above] at (2.1,.1) {$T_{r+2,d+1}$};
\node [above] at (1.1,2.1) {$T_{r+1,d}$};
\node [above] at (3.1,2.1) {$T_{r+1,d+1}$};
\node [above] at (2.1,4.1) {$T_{r,d}$};

\node [left] at (1.5,5) {A};
\node [right] at (2.5,5) {B};
\node [below] at (2,4) {$C$};

\node [left] at (.5,3) {$D$};
\node [right] at (1.5,3) {$E$};
\node [left] at (2.5,3) {$F$};
\node [right] at (3.5 ,3) {$G$};
\node [below] at (1,2) {$H$};
\node [below] at (3,2) {$I$};

\node [left] at (1.5,1)  {$J$};
\node [right] at (2.5,1)  {$K$};
\node [below] at (2,0) {$L$};

\end{tikzpicture}
&

\begin{tikzpicture} [xscale=1,yscale=1]

 \node [right] at (1,7) {$A = \prod_{i=2}^r x(c,i) \cdot \prod_{i=2}^d y(r,i)$};
\node [right] at (1,6.4) {$B= A \cdot r_{R,L}(r,d)$};
\node [right] at (1,5.8) {$C=A \cdot r_{B,L}(c,r,d)$};
\node [right] at (1,5.2) {$D=A \cdot x_{c,r+1} \cdot z_{r,d}$};
\node [right] at (1,4.6) {$E=D \cdot  r_{R,L}(r+1,d)$};
\node [right] at (1,4.0) {$F= D \cdot y_{r+1,d+1} $};
\node [right] at (1,3.4) {$G = F \cdot r_{R,L}(r+1,d+1)$};
\node [right] at (1,2.8) {$H = D \cdot   r_{B,L}(c,r+1,d) $};
\node [right] at (1,2.2) {$I = F \cdot r_{B,L}(c,r+1,d+1)$};
\node [right] at (1,1.6) {$J= F \cdot x_{c,r+2} \cdot z_{r+1,d+1} $};
\node [right] at (1,1.0) {$K=J \cdot r_{R,L}(r+2,d+1)$};
\node [right] at (1,0.4) {$L=J \cdot r_{B,L}(c,r+2,d+1) $};

\end{tikzpicture} \\
\hline
\end{tabular}  
\end{center}

\end{figure}

  
\begin{figure}[!ht]
\begin{center}
\caption{The left-hand panel presents the edges of $T^{c}$ that are arguments of \eqref{equ:leftedge} used to compute   $T_{B,r,d}^{(c,c-1)}.$
The right-hand panel contains the values of these arguments. 
}\label{fig:sampleproofleft}
\begin{tabular} {|c|c|}
\hline
\begin{tikzpicture}[xscale=1,yscale=1]



\draw    (1,0)--(2,2)--(3,0)--(1,0);
\draw    (0,2)--(1,4)--(2,2)--(0,2);
\draw     (2,2)--(3,4)--(4,2)--(2,2);

\node [above] at (2.1,.1) {$T_{r+1,d}$};
\node [above] at (1.1,2.1) {$T_{r,d-1}$};
\node [above] at (3.1,2.1) {$T_{r,d}$};

\node [left] at (.5,3) {$M$};
\node [right] at (1.5,3) {$N$};
\node [left] at (2.5,3) {$O$};
\node [right] at (3.5 ,3) {$P$};
\node [below] at (1,2) {$Q$};
\node [below] at (3,2) {$R$};

 \node [left] at (1.5,1) {S};
\node [right] at (2.5,1) {T};
\node [below] at (2,0) {$U$};

\end{tikzpicture}
&

\begin{tikzpicture} [xscale=1,yscale=1]

 \node [right] at (1,7) {$M = \frac{A}{y(r,d)}$};
\node [right] at (1,6.4) {$N= M \cdot r_{R,L}(r,d-1)$};
\node [right] at (1,5.8) {$O=M \cdot y(r,d)$};
\node [right] at (1,5.2) {$P=O \cdot r_{R,L}(r,d)$};
\node [right] at (1,4.6) {$Q=M \cdot  r_{B,L}(c, r,d-1)$};
\node [right] at (1,4.0) {$R= O \cdot r_{B,L}(c, r,d) $};
\node [right] at (1,3.4) {$S = O \cdot x(c,r+1) \cdot z(r,d)$};
\node [right] at (1,2.8) {$T = S \cdot   r_{R,L}(r+1,d) $};
\node [right] at (1,2.2) {$U = S \cdot r_{B,L}(c,r+1,d)$};

\end{tikzpicture} \\
\hline
\end{tabular}  
\end{center}

\end{figure}


\section{Completion of proof of Theorem \ref{the:edgefactors}}\label{sec:completion}

Section \ref{sec:sampleproof} proves Theorem \ref{the:edgefactors}(ii) for the special case,  $i=c-1.$ This section provides proofs for the cases $1 \le i \le c-2.$ This section also proves Theorem \ref{the:edgefactors}(i). The proofs will be accomplished by a sequence of  Propositions.  Proposition \ref{lem:CTemp1} was illustrated and motivated by   \eqref{equ:tailratios}. 

\begin{proposition}\label{lem:CTemp1}
\begin{equation}\label{CTemp1}
T^{c,c-1}_{X,r,d}= g(c) T^{c-1}_{L,r,d}, \qquad X \in \{L,R,B\}, 1 \le r \le c-1, 1 \le d \le r.\end{equation}
\end{proposition}
\begin{proof}
For given $c$ we can compute $T^{c,c-1}_{L,1,1}$ by \eqref{equ:perimeteredge} and then  verify 
 that $g(c)=T^{c,c-1}_{L,1,1}$.
By Definition \ref{def:initialcgrid}, $T^{c-1}_{L,1,1} = 1,$ and therefore using Definition \ref{def:rr} or the four local functions
\begin{equation}\label{CTemp2}
T^{c,c-1}_{L,1,1}= g(c) T^{c-1}_{L,1,1}.\end{equation}

Both $T^{c-1}$ and $T^{c,c-1}$ satisfy the four edge-ratio relationships, $T^{c-1}$ by definition, and $T^{c,c-1}$ by the result of Section \ref{sec:sampleproof}. Therefore,  \eqref{CTemp1} follows from \eqref{CTemp2} because all edges are identical multiples (using the four edge-ratio relationships) of the left edge of the top corner triangle. 
\end{proof}

\begin{proposition}\label{lem:CTemp2}
\begin{equation}\label{CTemp3}
T^{c,i}_{X,r,d}=  f(c,i) \times T^{i}_{X,r,d}, \qquad
1 \le r \le c-i, 1 \le d \le r, X \in \{L,R,B\}.\end{equation}
\end{proposition}
\begin{proof}
We  iterate \eqref{CTemp1}.
 
If we start with an initial $c-1$ grid and row reduce it once, then by  Proposition \ref{lem:CTemp1} we have
$T^{c-1,c-2}_{L,1,1}= g(c-2) T^{c-2}_{L,1,1}.$
But row reducing $T^{c}$ once and then row reducing $T^{c,c-1}$ once is the same
as row reducing $T^{c}$ twice. In other words, 
$T^{c,c-2}_{L,1,1}= g(c) \times g(c-1) \times T^{c-2}_{L,1,1}$ and hence, since
$T^{c,c-2}$ and $T^{c-2}$ both satisfy the edge-ratio relationships, 
$T^{c,c-2}_{X,r,d}= g(c) \times g(c-1) \times T^{c-2}_{X,r,d}$ for $1 \le r \le c-2, 1 \le d \le r, X \in \{L,R,B\}.$ 

Inductively iterating the above argument we have, for $1 \le i \le c-1,$
that  $T^{c,c-i}_{L,1,1}= \prod_{j=0}^{j=i-1}g(c-j) \times T^{c-i}_{L,1,1}=   f(c,c-i) \times T^{c-i}_{1,1,L}$ the last equality following  from  Proposition \ref{lem:gf}.
Since, $T^{c,c-i}$ and $T^{c-i}$ both satisfy the edge-ratio relationships,
we then also have $T^{c,c-i}_{X,r,d}=  f(c,i) \times T^{c-i}_{X,r,d}$
for $1 \le r \le c-i, 1 \le d \le r, X \in \{L,R,B\}.$
\end{proof} 

 Propositions \ref{lem:CTemp1} and \ref{lem:CTemp2}, complete the proof of Theorem \ref{the:edgefactors}(ii). We proceed to prove Theorem \ref{the:edgefactors}(i).

\begin{proposition}\label{lem:CTemp3}
$$t(c,i) = f(c,i) \Delta(1,1,r_{B,L}(i,1,1))$$
\end{proposition}
\begin{proof} 
By Definition \ref{def:tail},  
the tail resistance distance $t(c,i)$  is simply the 12 o'clock leg of the  $Y$ arising from applying the $\Delta-Y$ transformation to   $T^{c,i}_{1,1}.$   By  Proposition \ref{lem:CTemp2}, $T^{c,i}_{1,1,X}= f(c,i) \cdot T^{i}_{1,1,X}, \text{ for } X \in \{L,R,B\}$. By Definition \ref{def:initialcgrid},
$\{
T^{i}_{1,1,L}, 
T^{i}_{1,1,R},
T^{i}_{1,1,B}\}=
\{1,1,r_{B,L}(i,1,1)\}.$
The proof now follows by substitution and verification.
\end{proof}

\begin{proposition}\label{lem:CTemp4}
i) $t(c,1) = \frac{c}{2c+1}$\\
ii) $t(c,i) = \frac{1}{i} \frac{c}{2c+1}.$\\
iii) $\frac{t(c,i)}{t(c,1)} = \frac{1}{i}.$
\end{proposition}
\begin{proof} By  Proposition \ref{lem:CTemp3} and a straightforwad verification. \end{proof}

But  Proposition \ref{lem:CTemp4} is Theorem \ref{the:edgefactors}(i).  Therefore, the proof of Theorem \ref{the:edgefactors} is complete.

 
 

 \bmhead{Acknowledgments}{The author acknowledges A. Francis for suggesting exploring the triangular grids; A. Francis and E. Evans, for hosting a webinar introducing me to their paper and its results; E. Evans, for several follow-up papers wherein definitions and notations  have been considerably improved   resulting with greater clarity; and several reviewers  (anonymous and otherwise) who suggested clarifying improvements to definitions and notations.}

\section*{Declarations}
\begin{itemize}
\item Funding: No funding has been received for this research.
\item Conflict of interest/Competing interests: There are no conflicts of interest or competing interests for this article. 
\item Ethics approval: Not applicable
\item Consent to participate: Not applicable:
\item Consent for publication: Not applicable (There is only one author)
\item Availability of data and materials: All data is included in the paper.
\item Code availability: No special code is used. 
\item Authors' contributions: There is only one author who wrote the work. Various improvements to notations and defintions have been attributed to published works as applicable.
\end{itemize}

\end{document}